\newtheorem{theorem}{Theorem}[section]
\newtheorem{lemma}[theorem]{Lemma}
\newtheorem{proposition}[theorem]{Proposition}
\newtheorem{remark}[theorem]{Remark}
\newtheorem{definition}[theorem]{Definition}
\newtheorem{example}[theorem]{Example}
\newcommand{\nc}{\newcommand}
\nc{\cH}{{\mathcal H}}
\nc{\cA}{{\mathcal A}}
\nc{\cG}{{\mathcal G}}
\nc{\cC}{{\mathcal C}}
\nc{\cD}{{\mathcal D}}
\nc{\cO}{{\mathcal O}}
\nc{\cI}{{\mathcal I}}
\nc{\cB}{{\mathcal B}}
\nc{\cY}{{\mathcal Y}}
\nc{\cK}{{\mathcal K}}
\nc{\cX}{{\mathcal X}}
\nc{\cS}{{\mathcal S}}
\nc{\cE}{{\mathcal E}}
\nc{\cF}{{\mathcal F}}
\nc{\cZ}{{\mathcal Z}}
\nc{\cQ}{{\mathcal Q}}
\nc{\cN}{{\mathcal N}}
\nc{\cP}{{\mathcal P}}
\nc{\cL}{{\mathcal L}}
\nc{\cM}{{\mathcal M}}
\nc{\cT}{{\mathcal T}}
\nc{\cW}{{\mathcal W}}
\nc{\cU}{{\mathcal U}}
\nc{\cJ}{{\mathcal J}}
\nc{\cV}{{\mathcal V}}
\nc{\bH}{{\mathbb H}}
\nc{\bA}{{\mathbb A}}
\nc{\bG}{{\mathbb G}}
\nc{\bC}{{\mathbb C}}
\nc{\bO}{{\mathbb O}}
\nc{\bI}{{\mathbb I}}
\nc{\bB}{{\mathbb B}}
\nc{\bY}{{\mathbb Y}}
\nc{\bK}{{\mathbb K}}
\nc{\bX}{{\mathbb X}}
\nc{\bS}{{\mathbb S}}
\nc{\bE}{{\mathbb E}}
\nc{\bF}{{\mathbb F}}
\nc{\bZ}{{\mathbb Z}}
\nc{\bQ}{{\mathbb Q}}
\nc{\bN}{{\mathbb N}}
\nc{\bP}{{\mathbb P}}
\nc{\bL}{{\mathbb L}}
\nc{\bM}{{\mathbb M}}
\nc{\bT}{{\mathbb T}}
\nc{\bW}{{\mathbb W}}
\nc{\bU}{{\mathbb U}}
\nc{\bD}{{\mathbb D}}
\nc{\bJ}{{\mathbb J}}
\nc{\bV}{{\mathbb V}}
\nc{\bbZ}{{\mathbb Z}}
\nc{\bR}{{\mathbb R}}
\nc{\fr}{{\rightarrow}}
\nc{\co}{{\nabla}}
\nc{\cu}{{\overlineline{\nabla}}}
\title {On rational maps from the product of two general curves}
\author{Yongnam Lee and Gian Pietro Pirola}
\date{}
\address{Department of Mathematical Sciences, KAIST, 291 Daehak-ro, Yuseong-gu, Daejon 305-701, Korea, and
Korea Institute for Advanced Study, Seoul 130-722, Korea}
\email{ynlee@kaist.ac.kr}
\address{Dipartimento di Matematica, Universit\`a di Pavia via Ferrata 1, 27100 Pavia, Italia}
\email{gianpietro.pirola@unipv.it}
\subjclass[2010]{Primary 14E05, Secondary 14H10, 14J29}
\begin{document}

\begin{abstract}
This paper treats dominant rational maps from the product of two very general curves to nonsingular projective surfaces. Combining the result in \cite{BaPi} we prove that the product of two very general curves of genus $g\geq 7$ and $g'\geq 3$ does not admit dominant rational maps of degree $> 1$ if the image surface is non-ruled. We also treat the case of the 2-symmetric product of a curve.
\end{abstract}
\maketitle



\section {Introduction}

Let $X\subset \bC\bP^3$ be a very general smooth surface of degree $\geq 5,$ and $\bC(X)$ be its function field. In \cite{LeeP} we proved that every proper subfield
 $\bC\subset K\subset \bC(X)$ is a pure transcendental extension of $\bC.$  This means that if $X$ is very general and  $F:X\dashrightarrow S$ is a dominant map which is not birational, then either $S$ is a point, a projective line or a rational surface.
We recall that a very general element of a variety $U$ has the property $P$ if $P$ holds in the complement of a union of countably many proper subvarieties of $U$.
Let $X$ be smooth complex projective variety of general type. The dominant rational maps of finite degree $X\dashrightarrow Y$ to smooth varieties of general type, up to birational equivalence of $Y$ form a finite set $M(X)$. The proof follows from the approach of Maehara \cite{Ma}, combined with the results of Hacon and McKernan \cite{HM}, of Takayama \cite {Ta}, and of Tsuji \cite{Ts}.

Motivated by this finiteness theorem for dominant rational maps on a variety of general type, and by the results obtained in \cite{BaPi} and \cite{LeeP}, in this paper we study the case of the product of two very general smooth curves $X=C\times D$ of genus $g_C$ and $g_D$ respectively.  The $2$-symmetric
product of $C$, $X=C_2$, is also treated. The product case has previously been studied in \cite{BaPi}.  It was proved there that if
$g_C\geq 7$, $g_D\geq 2$, and  $S\neq C\times D$ is of general type then a dominant rational map $F:C\times D\dashrightarrow S$ does not exist. Here we complete the analysis by considering  surfaces $S$ of Kodaira dimension ${\rm kod}(S)=0$ and $1.$  Our main result is
\begin{theorem}
($=$Theorem 3.1) Let $C$ and $D$ be very general curves of genus $g_C\geq 7$ and $g_D\geq 3$ respectively. Let
$F: C\times D\dashrightarrow S$  be a dominant rational map of degree $>1$  where $S$ is a smooth projective surface.
Then $S$ is a ruled surface, that is   ${\rm kod}(S)=-\infty.$
\end{theorem}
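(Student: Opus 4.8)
The plan is to run the Enriques--Kodaira classification of surfaces with $\mathrm{kod}(S)\in\{0,1\}$ against two rigidity features of a very general product: the simplicity and non-isogeny of $JC$ and $JD$ (so $\mathrm{End}(JC)=\mathrm{End}(JD)=\bZ$ and $\mathrm{Hom}(JC,JD)=0$), and the $\bQ$-irreducibility of the weight-two Hodge structure $H^1(C)\otimes H^1(D)$. After resolving $F$ to a morphism $\tilde F\colon X\to S$ with $X\to C\times D$ a composition of blow-ups, the case $\mathrm{kod}(S)=2$ with $S\not\cong C\times D$ is excluded by \cite{BaPi}, while $S\cong C\times D$ is excluded because a curve of genus $\ge 2$ has no selfmap of degree $>1$ (Riemann--Hurwitz) and the only fibrations of $C\times D$ are its two projections, so any dominant selfmap is a product and hence of degree $1$. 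It therefore remains to rule out $\mathrm{kod}(S)\in\{0,1\}$.

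The engine is a canonical-map dichotomy forcing $p_g(S)=0$. Since $C$ and $D$ are very general of genus $\ge 3$ they are non-hyperelliptic, so $\phi_{K_C}$ and $\phi_{K_D}$ are embeddings and, because $K_{C\times D}=p_C^*K_C\otimes p_D^*K_D$, the Segre-composed canonical map $\phi_{K_{C\times D}}$ is birational onto its image. Now $H^0(K_{C\times D})=H^0(K_C)\otimes H^0(K_D)$ is exactly the $(2,0)$-part of the irreducible summand $H^1(C)\otimes H^1(D)\subset H^2(C\times D)$, the remaining summands $H^2(C),H^2(D)$ being of type $(1,1)$. The map $F^*$ is an injection of Hodge structures and $\tfrac1{\deg F}F^*F_*$ is an idempotent, so $F^*T(S)$ is a Hodge direct summand of $H^2(C\times D)$; if $p_g(S)>0$ its $(2,0)$-part lies in $H^{1,0}(C)\otimes H^{1,0}(D)$ and, by irreducibility, $F^*$ must carry $H^0(K_S)$ isomorphically onto $H^0(K_{C\times D})$. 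Then $\phi_{K_{C\times D}}=\phi_{K_S}\circ F$ factors the birational canonical map of $C\times D$ through $F$, forcing $\deg F=1$, a contradiction. Hence $p_g(S)=0$, which already excludes abelian and K3 surfaces ($p_g=1$).

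Next I would force $q(S)=0$. A dominant map to a surface with $q(S)>0$ induces via the Albanese a surjection $JC\times JD\twoheadrightarrow\mathrm{Alb}(S)$; by simplicity and non-isogeny every nonzero quotient has dimension in $\{g_C,g_D,g_C+g_D\}$, all $\ge 3$, so $q(S)\in\{0\}\cup\{g_C,g_D,g_C+g_D\}$. This excludes bielliptic ($q=1$) and abelian ($q=2$) surfaces. For $\mathrm{kod}(S)=1$ with $q(S)\ge 3$, minimality gives $\chi(\cO_S)=\tfrac1{12}(K_S^2+e(S))\ge 0$, whereas $p_g(S)=0$ forces $\chi(\cO_S)=1-q(S)<0$, a contradiction; so $q(S)=0$. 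After these steps the only survivors are Enriques surfaces and properly elliptic surfaces with $p_g=q=0$ (Dolgachev type).

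The hard part will be these two residual $p_g=q=0$ cases, where both the Albanese and the naive canonical argument are vacuous. For an Enriques surface I would pass to the étale K3 double cover $\tilde S\to S$, classified by $0\ne\eta\in H^1(S,\bZ/2)$, and pull $\eta$ back along $F$. If $F^*\eta=0$ the cover splits and $F$ lifts to a dominant map $C\times D\dashrightarrow\tilde S$ with $p_g(\tilde S)=1$, contradicting the dichotomy above; if $F^*\eta\ne 0$ then $F$ lifts to a dominant map from the connected étale double cover $Y\to C\times D$ determined by $F^*\eta=(\eta_C,\eta_D)\in H^1(C,\bZ/2)\oplus H^1(D,\bZ/2)$. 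When one component vanishes, $Y$ is a product $\tilde C\times D$ (or $C\times\tilde D$) and one repeats the dichotomy, the delicate point being that $H^1(\tilde C)\otimes H^1(D)$ splits as $H^1(C)\otimes H^1(D)$ plus a Prym summand $\mathrm{Prym}\otimes H^1(D)$, each of which must be shown $\bQ$-irreducible with $h^{2,0}>1$ for a very general cover, so that $p_g(\tilde S)=1$ is again impossible; when both components are nonzero $Y$ is the connected quotient of $\tilde C\times\tilde D$ and controlling its transcendental Hodge structure is the most technical step. For the Dolgachev case I would pull back the Iitaka elliptic fibration $S\dashrightarrow\bP^1$: the induced pencil on $C\times D$ has general fibre $G$ whose normalisation dominates a general elliptic fibre $E_b$ of $S$, and since for very general $C,D$ a curve in $C\times D$ cannot dominate an elliptic curve without one projection inducing a nonconstant map from $C$ or $D$ to $E_b$ (impossible by simplicity of $JC,JD$), this closes the argument. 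The genuine obstacle is thus the $\bQ$-irreducibility and $h^{2,0}$-estimate for the Prym tensor Hodge structures attached to the connected double covers in the Enriques case.
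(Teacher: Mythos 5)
Your reduction to $p_g(S)=q(S)=0$ is sound and essentially coincides with the paper's Lemma~\ref{pgs}, but both of your residual cases have problems, one of them fatal. In the Dolgachev case your key claim --- that a curve in $C\times D$ cannot dominate an elliptic curve unless one of the two projections already induces a nonconstant map from $C$ or from $D$ to that elliptic curve --- is false. For example, $C\times D$ dominates $\bP^1\times\bP^1$, hence $\bP^2$, hence every rational elliptic surface $S_0\to\bP^1$; the general fibre of the induced pencil on $C\times D$ is a curve dominating an elliptic curve, yet neither projection maps $C$ or $D$ nonconstantly to it. The point is that the Jacobian of a curve $G\subset C\times D$ is not controlled by $JC\times JD$: it has many isogeny factors beyond those induced from $C$ and $D$, and nothing prevents some of them from being elliptic, so simplicity of $JC$ and $JD$ is irrelevant here. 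The paper's treatment of ${\rm kod}(S)=1$ is entirely different: it first proves $H_1(S,\bZ)=0$ (Theorem~\ref{tor}), so that ${\rm Pic}(S)$ is torsion free, and then invokes the deformation-theoretic result of \cite{LeeP} (Remark 4.4 there; compare Proposition~\ref{dimension}) that a very general curve of genus $\geq 7$ admits no birational immersion into such an elliptic surface; consequently $F(C\times\{t\})$ is rational for general $t$, so $S$ would be covered by rational curves, i.e.\ ruled, a contradiction. This is exactly where the hypothesis $g_C\geq 7$ enters, and it is a warning sign that your argument never uses it.

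In the Enriques case your plan (pull back the K3 double cover, lift $F$ to a connected \'etale double cover of $C\times D$, and contradict the existence of an irreducible Hodge substructure with $h^{2,0}=1$) is the right strategy and runs parallel to the paper's Theorem~\ref{tor}; but the statement you defer --- $\bQ$-irreducibility and $h^{2,0}\geq 2$ for the Prym tensor summands of $H^2$ of such covers --- is the actual mathematical content of the paper, not a technicality. It occupies all of Subsection 2.1: the PTF condition, Lemmas~\ref{ind} and \ref{sur}, Proposition~\ref{hs20} and Theorem~\ref{kodsh}, proved by an infinitesimal variation of Hodge structure argument that produces decomposable classes inside any Hodge substructure with nonzero $(2,0)$-part and then shows such a substructure contains a whole summand $V_{a,b}=H^0(C,\omega_C(L_1^a))\otimes H^0(D,\omega_D(L_2^b))$. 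Note also that you cannot obtain this by quoting irreducibility of the transcendental $H^2$ of a very general product, because the \'etale covers $\tilde C$, $\tilde D$ of very general curves are not themselves very general curves; circumventing exactly this is why the paper works with the Kuranishi family of the covering and cup-products against $H^1(C,T_C)\oplus H^1(D,T_D)$. As it stands, your proposal correctly identifies the skeleton and reproduces the $p_g=q=0$ reduction, but it leaves the Enriques case resting on an unproved core result and the properly elliptic case resting on a false lemma.
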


We recall Riemann-Hurwitz Theorem (Chapter XXI in \cite{ACG}), which says that if $\phi: C\to \bar C$ is a non-constant morphism from a very general curve $C$ of genus $g>1$ onto a curve $\bar C$, then either $\phi$ is birational, or else $\bar C$ is rational. As in  \cite{gp,LeeP}  Hodge theory and deformation theory are the two main methods used to handle our problem. The analysis of the Hodge structure  of \cite{BaPi}  allows to reduce the problem to the case where the geometric genus and the irregularity of $S$ are zero:
$p_g(S)=q(S)=0.$ Therefore we have to deal with the case where $S$ is an elliptic surface.
The main new technical obstacle comes out from the fact that the fundamental group $C\times D$ is not abelian.
When $S$ has Kodaira dimension $1,$
we cannot infer directly neither that the elliptic surface $S$ has bounded topology nor that its moduli space has bounded dimension.
To overcome this problem we begin by proving that the first homology group of $S$, $H_1(S,\bZ),$ vanishes.
Then we use the result of the deformation of curves on elliptic surfaces with multiple fibers proved in \cite{LeeP} to obtain a contradiction.
The main ingredient we use to show that $H_1(S,\bZ)=0$ is Theorem~\ref{kodsh} which provides some restriction on the Hodge structure of certain abelian covering of $C\times D.$
Theorem~\ref{kodsh} seems to be of independent interest and to deserve further development.

The last section deals with
the case of the $2$-symmetric product of a curve. This case is simpler, by using a slightly improvement on the deformation of curves on elliptic surfaces $S$ we can prove (we do not attempt to find the optimal genus)
\begin{theorem} \label{1,1}
($=$Theorem 4.5) Let $C$ be a very general curve of genus $\geq 10$. If $f: C_2 \dashrightarrow S$ is a dominant rational map of degree $> 1$ then
$S$ is a rational surface.
\end{theorem}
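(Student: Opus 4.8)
The plan is to run the two-step strategy of the product case---first pin down the Hodge structure of $S$, then obstruct the remaining surfaces by deforming curves---while systematically exploiting the explicit covering family $C_p=\{p+x:x\in C\}\cong C$ inside $C_2$, which is what makes the argument simpler than for $C\times D$. After resolving the indeterminacy of $f$ on a blow-up $\widetilde{C_2}$ and replacing $S$ by its minimal model, I would first record the canonical identifications $H^0(C_2,\Omega^1_{C_2})\cong H^0(C,\Omega^1_C)$ and $H^0(C_2,\Omega^2_{C_2})\cong\wedge^2H^0(C,\Omega^1_C)$ (from the $\sigma$-invariant forms on $C\times C$), under which the exterior product of $1$-forms corresponds to the wedge in $\wedge^2H^0(C,\Omega^1_C)$. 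For very general $C$ the transcendental Hodge structure of $C_2$ is the irreducible piece $\wedge^2_0H^1(C,\bQ)$, so the injection $f^*$ of transcendental Hodge structures forces $p_g(S)\in\{0,\binom{g}{2}\}$; in the large case $f^*$ is an isomorphism of transcendental lattices and the degree computation of \cite{BaPi} gives $\deg f=1$, contradicting $\deg f>1$. Hence $p_g(S)=0$.

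Next I would deduce $q(S)=0$ by a Riemann--Hurwitz argument on the family $\{C_p\}$. Writing $U=f^*H^0(S,\Omega^1_S)\subseteq H^0(C,\Omega^1_C)$, the vanishing $p_g(S)=0$ gives $\eta_1\wedge\eta_2\in H^0(S,\Omega^2_S)=0$ for all $\eta_i$, hence $\wedge^2U=0$ in $\wedge^2H^0(C,\Omega^1_C)$ and so $q(S)=\dim U\le 1$. If $q(S)=1$, composing $f$ with the Albanese map $S\to E$ onto an elliptic curve yields a dominant $C_2\dashrightarrow E$; its restriction to each $C_p\cong C$ is a morphism from the very general curve $C$ to the non-rational curve $E$, which by the recalled Riemann--Hurwitz theorem must be constant. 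Since the $C_p$ sweep out $C_2$ and any two of them meet (at the point $\{p,p'\}$), the map $C_2\dashrightarrow E$ is globally constant, a contradiction. Thus $q(S)=0$; and once we also know ${\rm kod}(S)=-\infty$, a ruled surface with $q=0$ is ruled over $\bP^1$, i.e.\ rational, which is the assertion.

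It therefore remains to exclude ${\rm kod}(S)\ge 0$. With $p_g(S)=q(S)=0$ the surface $S$ is either of general type or elliptic (Enriques if ${\rm kod}=0$, properly elliptic if ${\rm kod}=1$), and in the elliptic case the base of the fibration $\pi\colon S\to B$ has $g(B)\le q(S)=0$, so $B=\bP^1$. The images $\Gamma_p=\overline{f(C_p)}$ form a one-dimensional family of curves of geometric genus $\le g$ that covers $S$ (so $\Gamma_p^2\ge 0$); since $g\ge 10$ they are not contained in fibres and are therefore multisections of $\pi$. The core of the proof is then to invoke the deformation theory of curves on elliptic surfaces with multiple fibres from \cite{LeeP}: the multiple fibres $m_iF_i$ force the multisection degree $\Gamma_p\cdot F$ to be divisible by the $m_i$, which through the canonical bundle formula pushes $K_S\cdot\Gamma_p$, hence the genus of $\Gamma_p$, above any prescribed value, contradicting $\mathrm{genus}(\Gamma_p)\le g$ for $g\ge 10$. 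I expect this uniform genus estimate---sharpening the bound of \cite{LeeP} just enough to reach $g\ge 10$ over all multiple-fibre configurations, the ``slight improvement'' alluded to in the introduction---to be the main technical obstacle; the general type case is disposed of in parallel by the same covering-family estimate together with the argument of \cite{BaPi}. Combining the three steps gives ${\rm kod}(S)=-\infty$ and $q(S)=0$, so $S$ is rational.
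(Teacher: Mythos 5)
Your first two steps ($p_g(S)=0$ via irreducibility of the transcendental Hodge structure and the canonical map, then $q(S)=0$ via the wedge argument and the Albanese) are sound and essentially match the paper. The genuine gap is in your third step, which is the core of the theorem. Your proposed mechanism --- that the multiplicities $m_i$ of the multiple fibers divide the multisection degree $\Gamma_p\cdot F$ and thereby ``push $K_S\cdot\Gamma_p$, hence the genus of $\Gamma_p$, above any prescribed value'' --- is false. With $p_g=q=0$ and two multiple fibers $(m_1,m_2)$, the canonical bundle formula gives $K_S\cdot\Gamma_p=(1-1/m_1-1/m_2)(F\cdot\Gamma_p)$, which is merely positive, not large: on the Dolgachev surface $S_{2,3}$ it can equal $1$. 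Worse, the paper's Example 4.2 exhibits on $S_{2,3}$, for every $g>1$, curves of genus $g$ moving in families of dimension at least $g-2$; so \emph{no} uniform genus estimate on the curves $\Gamma_p$ can produce a contradiction, and an argument of the shape you propose cannot work.

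What the paper actually does is a moduli-theoretic dimension count, which requires two ingredients you do not supply. First, one must bound the moduli of the target: the paper proves $f'_\ast:\pi_1(Y)\to\pi_1(S)$ is surjective (via the induced \'etale cover $S'\to S$ and proportionality of $\chi(\cO)$), so $\pi_1(S)$ is a quotient of $\pi_1(C_2)\cong H_1(C,\bZ)$, hence abelian; by \cite{Dol} the fibration $\pi:S\to\bP^1$ then has exactly two multiple fibers and such surfaces move in a $10$-dimensional family. Second, one needs the bound of Proposition 4.1 (the advertised ``slight improvement'' of \cite{LeeP}): a birational immersion $\kappa:C\to S$ of a non-hyperelliptic, non-trigonal curve deforms in dimension at most $g-2$, the new point being the exclusion of $\deg\kappa^\ast K_S=1$ by analyzing the two multiple-fiber configurations $(2,3)$ and $(2,4)$. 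The contradiction is then: the coordinate curves $C_P\subset C_2$ over the moduli of $C$ give a $(3g-3+1)$-dimensional family of birational immersions, so some fixed $S$ receives a family of dimension $\geq 3g-2-10$, forcing $3g-2-10\leq g-2$, i.e.\ $g\leq 5$. Note also that the genus-$10$ threshold in the statement comes from the \emph{general type} case (the inequality $3g-2-19\leq g-2$ of \cite{gp}, plus ad hoc exclusion of $g=8,9$), not from the elliptic case, and that your mechanism says nothing about Enriques surfaces, where $K_S\cdot\Gamma_p=0$; the paper disposes of those by the same dimension count or by the $\pi_1$ method of Theorem 2.9.
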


In this paper we work on the field of complex numbers. We use the customary notation in algebraic geometry. For a smooth projective surface $S$, we let $p_g(S)=h^{2,0}(S)$ be the geometric genus, $q(S)=h^{1.0}(S)$ be the irregularity of $S$, and $alb(S)$ be the Albanese variety of $S$. We use the notation $\equiv$ for a linear equivalence of divisors.

\subsection*{Acknowledgements}
This work was initiated when the first named author visited University of Pavia supported by INdAM (GNSAGA) and ended when the second named author visited KAIST. They would like to thank University Pavia and KAIST for the hospitality. We thank the referee for several useful suggestions and remarks. The first named author is partially supported by the National Research Foundation of Korea(NRF) funded by the Korean government(MSIP)(No.2013006431) and (No.2013042157).
The second named author is partially supported by INdAM (GNSAGA); PRIN 2012 \emph{``Moduli, strutture geometriche e loro applicazioni''} and FAR 2013 (PV) \emph{``Variet\`a algebriche, calcolo algebrico, grafi orientati e topologici''}

\medskip


\section{Product of curves}
This section considers the product of two very general curves $C\times D$ of genus $g\geq 3$ and the dominant rational map
$f:C\times D\dashrightarrow S$ where $S$ is a smooth projective surface with $0\le {\rm kod}(S)\le 1.$


\subsection{Hodge structure on abelian \'etale covering of curves} \label{2.1}
In this subsection we let $C$ and $D$  be curves of genus $g_C\geq 3$ and $g_D\geq 3$ respectively. We provide a proof of the proposition~\ref{hs20}, which is the main technical result of this paper. We will deal here with the infinitesimal variation of the Hodge structure of abelian \'etale covering of $C'\times D'\to C\times D$. We will show the existence of decomposable elements in any irreducible sub-Hodge structure of $H^2(C'\times D')$ orthogonal to the classes of the algebraic cycles. These decomposable elements provides the role of the {\em highest weights} and allows to understand these sub-hodge structures.

\begin{definition}
We call a curve $C$  pairwise torsion free (PTF) if
for any pair of distinct points $A, B\in C$ with $A\neq B,$
$nA\not\equiv nB$ for any nonzero integers $n$ i.e. $\cO_C(A-B)$ is not a torsion element
of ${\rm Pic}^0(C).$
\end{definition}

\begin{remark} A PTF curve $C$ is not hyperelliptic, and its genus $g_C\geq 3.$ From the computation of the moduli of the Hurwitz schemes of a curve having a map to $\bP^1$ with $2$ total ramification points, we see that the moduli of curves which are not PTF depends on $2g-1$ parameters. Then a very general curve of genus $\geq 3$  is a PTF curve.
\end{remark}
We now prove two simple algebraic lemmas that will be helpful in studying the infinitesimal variation of Hodge structure.

\begin{lemma} \label{ind}
Let $C$ be a PTF curve of genus $g\geq 3$, and let $L$ and $M$ be torsion line bundles in ${\rm Pic}^0(C).$
Let $\alpha\in H^0(C,\omega_C(L))$ and $\beta\in H^0(C,\omega_C(M))$ be non trivial
sections.  Assume that either $L\neq M$ or $L=M$ with $\alpha\neq \lambda \beta$ for any $\lambda\in \bC.$ Then
there exists $\eta\in H^1(C,T_C)$ such that the cup-product $\eta \cdot \alpha\neq 0\in H^1(C,L)$ and $\eta\cdot\beta=0\in H^1(C,M).$
\end{lemma}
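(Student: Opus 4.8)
The plan is to dualize the statement through Serre duality and turn it into an elementary question about multiplication maps and base loci. Since $T_C=\omega_C^{-1}$, the cup product sends $H^1(C,T_C)\otimes H^0(C,\omega_C(L))$ into $H^1(C,L)$, and Serre duality gives $H^1(C,T_C)\cong H^0(C,\omega_C^{\otimes 2})^{*}$ together with $H^1(C,L)^{*}\cong H^0(C,\omega_C\otimes L^{-1})$. The associativity of the cup product yields, up to sign, the identity $\langle \eta\cdot\alpha,\mu\rangle=\langle \eta,\alpha\mu\rangle$ for $\eta\in H^1(C,T_C)$, $\alpha\in H^0(C,\omega_C(L))$ and $\mu\in H^0(C,\omega_C\otimes L^{-1})$, where $\alpha\mu\in H^0(C,\omega_C^{\otimes 2})$. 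Hence, regarding $\eta$ as a linear form on $H^0(C,\omega_C^{\otimes 2})$, the requirement $\eta\cdot\alpha\neq 0$ means that $\eta$ does not vanish on $W_\alpha:=\alpha\cdot H^0(C,\omega_C\otimes L^{-1})$, while $\eta\cdot\beta=0$ means that $\eta$ vanishes on $W_\beta:=\beta\cdot H^0(C,\omega_C\otimes M^{-1})$. Since $H^1(C,T_C)$ is the full dual of $H^0(C,\omega_C^{\otimes 2})$, such an $\eta$ exists if and only if $W_\alpha\not\subseteq W_\beta$, so the lemma reduces to proving this non-inclusion.

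Next I would describe $W_\alpha$ and $W_\beta$ geometrically. Writing $A={\rm div}(\alpha)$ and $B={\rm div}(\beta)$, both effective of degree $2g-2$, a section $s\in H^0(C,\omega_C^{\otimes 2})$ lies in $W_\alpha$ exactly when $s/\alpha$ is holomorphic, i.e. when ${\rm div}(s)\geq A$; thus $W_\alpha$ is the space of bicanonical sections vanishing along $A$, and similarly $W_\beta$ is the space vanishing along $B$. Consequently the base divisor of the subsystem $W_\alpha\subseteq|\omega_C^{\otimes 2}|$ equals $A$ plus the base divisor of $|\omega_C\otimes L^{-1}|$, and likewise for $W_\beta$, $B$ and $|\omega_C\otimes M^{-1}|$.

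The crucial step, and the only place where the PTF hypothesis is essential, is to show that $\omega_C\otimes L^{-1}$ and $\omega_C\otimes M^{-1}$ are base point free. If $L=\cO$ this is the classical base point freeness of $\omega_C$. If $L$ is a nontrivial torsion bundle, then $h^0(\omega_C\otimes L^{-1})=g-1$, and a point $p$ is a base point precisely when $h^0(\omega_C\otimes L^{-1}(-p))=g-1$ as well; by Riemann--Roch and Serre duality this forces $h^0(L(p))=1$, that is $L\cong\cO(q-p)$ for some point $q$. As $L$ is a nontrivial torsion bundle this would make $\cO(q-p)$ a nontrivial torsion element of ${\rm Pic}^0(C)$ with $q\neq p$, contradicting the PTF property. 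Hence $\omega_C\otimes L^{-1}$ is base point free, so the base divisor of $W_\alpha$ is exactly $A$, and that of $W_\beta$ is exactly $B$.

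Finally I would conclude as follows. If $W_\alpha\subseteq W_\beta$, then every common zero of $W_\beta$ is a common zero of $W_\alpha$, whence $B\leq A$ as effective divisors; since $\deg A=\deg B=2g-2$ this forces $A=B$. Then $\omega_C\otimes L\cong\cO(A)\cong\omega_C\otimes M$, so $L=M$, and $\alpha/\beta$ is a nowhere vanishing holomorphic function, hence a constant $\lambda$, giving $\alpha=\lambda\beta$. This is exactly the configuration excluded by the hypothesis, so $W_\alpha\not\subseteq W_\beta$ and the lemma follows. I expect the main obstacle to be the base point freeness computation of the third paragraph: it is precisely where being PTF is indispensable, whereas the Serre-duality translation and the concluding linear algebra are formal.
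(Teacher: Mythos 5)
Your proof is correct, and after the opening reduction it diverges from the paper's argument in an interesting way. Both proofs begin identically: Serre duality turns the problem into showing that $H_\alpha=\alpha\cdot H^0(C,\omega_C(-L))$ is not contained in $H_\beta=\beta\cdot H^0(C,\omega_C(-M))$ inside $H^0(C,\omega_C^{\otimes 2})$ (your $W_\alpha$, $W_\beta$ are exactly these spaces). From there the paper argues by a dimension count: it takes the greatest common effective divisor $G$ of ${\rm div}(\alpha)$ and ${\rm div}(\beta)$, uses the PTF hypothesis to rule out $\deg G=2g-3$, identifies $H_\alpha\cap H_\beta$ with $H^0(C,G-L-M)$ via an exact sequence, and then invokes non-hyperellipticity (a Clifford-type bound on a divisor of degree $\leq 2g-4$) to get $\dim(H_\alpha\cap H_\beta)\leq g-2<\min(\dim H_\alpha,\dim H_\beta)$. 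You instead argue by base loci: PTF enters through the base-point-freeness of $|\omega_C\otimes L^{-1}|$ for $L$ nontrivial torsion (a base point would produce a nontrivial torsion bundle $\cO_C(q-p)$ with $q\neq p$), so the base divisor of $W_\alpha$ is exactly ${\rm div}(\alpha)$, and an inclusion $W_\alpha\subseteq W_\beta$ would force ${\rm div}(\beta)\leq{\rm div}(\alpha)$, hence equality of these degree-$(2g-2)$ divisors, hence $L=M$ and $\alpha=\lambda\beta$. Your route is more geometric and somewhat leaner: it needs no exact sequence, no Clifford bound, and in fact never uses non-hyperellipticity (which PTF implies but which the paper's count genuinely relies on); it also isolates the role of PTF in a single transparent statement about base points of twisted canonical systems. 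What the paper's computation buys in exchange is quantitative information -- an explicit bound $\dim(H_\alpha\cap H_\beta)\leq g-2$ on the intersection, not just the non-inclusion -- though for the application in Proposition 2.4 only the non-inclusion is used, so your argument would serve equally well there.
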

\begin{proof}
We consider the following subspaces $H_\beta$ and $H_\alpha$ of $H^0(C,\omega_C^2):$
 \[H_\beta=\beta \!\cdot\! H^0(C,\omega_C(-M)) ;\  \ \ H_\alpha=\alpha\! \cdot \!H^0(C,\omega_C(-L)).\]  If we can prove that $H_\alpha$ is not contained in $H_\beta,$ then by Serre duality we can find an element $\eta\in H^1(T_C)\cong H^0(C,\omega_C^2)^{\vee} $ such that for $\eta: H^0(C,\omega_C^2) \to \bC$  it holds
$\ker(\eta)\not\supset H_\alpha,$ but  $\ker(\eta)\supset H_\beta .$

Let $E$ and $F$ be the divisor of $\alpha$ and $\beta$ respectively. Let $G$ be an effective divisor of the maximal degree(with multiplicity)
contained in $E$ and $F.$ Since $\alpha$ and $\beta$ are not proportional
$\deg G<2g-2$. Moreover, since $C$ is a PTF curve $\deg G<2g-3.$ In fact if $\deg G=2g-3$ then there exist two distinct points
$P$ and $Q$ such that $G+P\equiv \omega_C(L)$, $G+Q\equiv \omega_C(M)$, and
$L-M\equiv P-Q.$ Since $L$ and $M$ are torsion, $L-M$ is also a torsion line bundle and we get a contradiction.
Next we consider the following exact sequence:
\begin{equation}
0\to G-L-M\to \omega_C(-L)\oplus \omega_C(-M)\stackrel{(\alpha,\beta)}\longrightarrow \omega_C^2(-G)\to 0
\end{equation}

It follows that $H_\alpha+H_\beta$ is the image of the cohomology map
 \[H^0(C, \omega_C(-L))\oplus H^0(C, \omega_C(-M))\stackrel{(\alpha,\beta)}\longrightarrow H^0(C,\omega_C^2(-G))\subset H^0(C,\omega_C^2)\]
and we can identify
 $H_\beta\cap H_\alpha$ and $H^0(C, G-L-M).$
Since $\deg  G-L-M=\deg G \leq 2g-4,$  and $C$ is not hyperelliptic  we obtain
\[\dim H_\alpha \cap H_\beta\leq g-2<\min(\dim H_\alpha,\dim H_\beta),\]
which proves our lemma.
\end{proof}

\begin {lemma} \label{sur}
Let $C$ be a curve of genus $\geq 3.$ Let  $L\in {\rm Pic}^0(C)$, $\alpha\in H^0(C,\omega_C(L)),$
$\alpha\neq 0$. Then
\begin{enumerate}
\item for any $\gamma\in H^1(C,L),$ there is an element $\eta\in H^1(C,T_C)$ such that
$\eta\cdot\alpha=\gamma;$
\item there is an element $\eta\in H^1(T_C)$ such that the map $H^0(C,\omega_C(L))\stackrel{\eta}\longrightarrow  H^1(C,L)$ is an isomorphism.
\end{enumerate}
\end{lemma}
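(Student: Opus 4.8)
The plan is to read both statements as properties of the cup-product pairing
$$H^1(C,T_C)\otimes H^0(C,\omega_C(L))\longrightarrow H^1(C,L),$$
and to access it through Serre duality using $T_C=\omega_C^{-1}$. Write $\mu_\alpha\colon H^1(C,T_C)\to H^1(C,L)$, $\eta\mapsto\eta\cdot\alpha$, and $\nu_\eta\colon H^0(C,\omega_C(L))\to H^1(C,L)$, $\alpha\mapsto\eta\cdot\alpha$. Part (1) is precisely the surjectivity of $\mu_\alpha$, which I would obtain by dualizing. Serre duality gives $H^1(C,T_C)\cong H^0(C,\omega_C^2)^\vee$ and $H^1(C,L)\cong H^0(C,\omega_C\otimes L^{-1})^\vee$, and by associativity of cup product, $(\eta\cup\alpha)\cup\beta=\eta\cup(\alpha\beta)$, so the transpose of $\mu_\alpha$ is the multiplication map
$$H^0(C,\omega_C\otimes L^{-1})\longrightarrow H^0(C,\omega_C^2),\qquad \beta\mapsto\alpha\beta.$$
Since $C$ is integral and $\alpha\neq 0$, this map is injective; hence $\mu_\alpha$ is surjective, giving (1).

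For part (2) I would first record the numerical input $\dim H^0(C,\omega_C(L))=\dim H^1(C,L)$, which follows from Riemann--Roch and Serre duality (both equal $g-1$ when $L\neq\cO$ and both equal $g$ when $L=\cO$). Thus it suffices to find one $\eta$ with $\nu_\eta$ injective. Setting $V=H^1(C,T_C)$, $W=H^0(C,\omega_C(L))$, $U=H^1(C,L)$, I would form the incidence variety
$$Z=\{([\eta],[\alpha])\in\bP(V)\times\bP(W)\ :\ \eta\cdot\alpha=0\ \text{in}\ U\}.$$
Its projection to $\bP(W)$ has fiber $\bP(\ker\mu_\alpha)$ over $[\alpha]$; by part (1) each $\mu_\alpha$ (with $\alpha\neq 0$) is surjective, so $\ker\mu_\alpha$ has constant dimension $\dim V-\dim U$ and these kernels form a subbundle, whence
$$\dim Z=(\dim W-1)+(\dim V-\dim U-1)=\dim V-2<\dim V-1=\dim\bP(V),$$
using $\dim W=\dim U$. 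Therefore the projection $Z\to\bP(V)$ is not surjective, and any $\eta$ in the complement of its image satisfies $\eta\cdot\alpha\neq 0$ for every $\alpha\neq 0$. For such $\eta$ the map $\nu_\eta$ is injective, hence an isomorphism by the dimension equality, which proves (2).

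The Serre-duality identifications and the Riemann--Roch bookkeeping are routine. The only genuine step is upgrading the pointwise surjectivity of (1) to the existence of a \emph{single} $\eta$ inducing an isomorphism, and I expect this to be the main (though mild) obstacle; the incidence-variety dimension count above is the device that resolves it, the decisive numerical fact being exactly the equality $\dim H^0(C,\omega_C(L))=\dim H^1(C,L)$ that forces $\dim Z<\dim\bP(V)$.
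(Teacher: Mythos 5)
Your proof is correct and follows essentially the same route as the paper: part (2) is exactly the paper's argument, an incidence correspondence in $\bP H^1(C,T_C)\times \bP H^0(C,\omega_C(L))$ whose dimension ($3g-5$ in both cases for $L$ trivial or not) falls short of $\dim \bP H^1(C,T_C)=3g-4$, so a general $\eta$ avoids every kernel and the equality $h^0(\omega_C(L))=h^1(L)$ upgrades injectivity to an isomorphism. The only cosmetic difference is in part (1), where you get surjectivity of $\eta\mapsto\eta\cdot\alpha$ by Serre duality and injectivity of multiplication by $\alpha$ on $H^0(C,\omega_C\otimes L^{-1})$, while the paper uses the exact sequence $0\to T_C\stackrel{\alpha}{\longrightarrow} L\to L_E\to 0$ (with $E=\mathrm{div}(\alpha)$) and $h^1(L_E)=0$; these are dual formulations of the same computation.
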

\begin{proof}
Let $E$ be again the divisor of $\alpha $ and consider the exact sequence:
\[0\to T_C \stackrel{\alpha}\longrightarrow L\to L_E\to 0 \quad \text{where $L_E=\cO_E(L)$}.\]
Since $h^1(L_E)=0$ it  follows that   $H^1(T_C)\stackrel{\alpha}\longrightarrow H^1(C,L)$ is surjective.
This proves the first part of the lemma.
For the second part we consider the coboundary map:
\[\partial: H^0(C,L_E)\to H^1(C,T_C).\]
We have that $h^0(L_E)=\deg E=2g-2.$ Therefore $\Gamma_\alpha=\partial(H^0(C,L_E))$ has dimension $2g-3$ if $L=\cO_C$ or $ 2g-2$ if $L\neq \cO_C$. We remark that \[\eta\in \Gamma_\alpha \iff
\alpha\in \ker{\eta}.\] So we have to show that $\bigcup_\alpha \Gamma_\alpha  \neq H^1(C,T_C).$
In fact every element in $H^1(T_C)\setminus \bigcup_\alpha \Gamma_\alpha $ defines an isomorphism
$H^0(C,\omega_C(L))\stackrel{\eta}\longrightarrow  H^1(C,L).$

This will be done by a dimension count passing to the associated projective spaces. Consider the projective space $\bP=\bP H^1(T_C),$ $\dim \bP=3g-4,$ we let $\bP_\alpha \subset \bP$ be the sub-projective space associated to $\Gamma_\alpha.$ We have to show that $\bigcup_\alpha \bP_\alpha\neq \bP.$
Let $\bP_L$ be the projective space associated to  $H^0(C,\omega_C(L)).$
Since  $h^0(C,\omega_C(L))=g-1$ if $L$ is not trivial and $g$ if it is trivial, we have that $\dim\bP_L=g-2$ and $g-1$ respectively.
Now consider the incidence correspondence $\cI \subset \bP\times \bP_L:$
\[\cI =\{((\eta),(\alpha))\in \bP\times \bP_L: \eta\cdot\alpha=0\}. \]
Let $\pi_i$ for $i=1, 2$ be the projections. Since $\pi_2$ is surjective and the fibers are the $\bP_\alpha$ we get $\dim \cI=3g-5.$
Set $Y_{L}=\pi_1(\cI)\subset \bP.$ We notice that
\[Y_L=\bigcup_{(\alpha) \in \bP_L}
\bP_{\alpha}=\{ (\zeta) : \zeta\in \Gamma_\alpha,\ (\alpha)\in \bP_L\}.\]
Therefore we obtain $\dim Y_L\leq 3g-5$  in all cases (and the equality must hold since the $Y_L$ can be defined by the vanishing of a determinant).  This proves the lemma.
\end{proof}

Now we set our notation. We let  $p>1$ be a prime number, $L_1$ be a line bundle on $C$ and $L_2$ be a line bundle on $D,$
such that $L_1^p=\cO_C$ and $L_2^p=\cO_D.$ We assume that $L_1$ and $L_2$ are not trivial, but
the case $L_1=\cO_C$ could be considered and it is simpler. Let $f_1:C'\to C$ and $f_2:D'\to D$ be the \'etale covering associated to $L_1$ and $L_2.$ One has:

\begin{equation} \label{eigen}
f_{1\ast} \cO_{C'}= \sum_{i=0}^{p-1} L_1^i\ \ \ f_{2\ast} \cO_{D'}= \sum_{i=0}^{p-1} L_2^i.\end{equation}

Set $X=C'\times D'.$ We have from the Kunneth decomposition $H^{2,0}(X)\cong H^{1,0}(C')\otimes H^{1,0}(D'),$ then using the above decomposition (\ref{eigen}) :

\[H^{2,0}(X)\cong H^{1,0}(C')\otimes H^{1,0}(D'); \ \ H^{0,2}(X)\cong H^{0,1}(C')\otimes H^{0,1}(D')\]
\[ H^{1.0}(C')\cong \oplus_{i=0}^{p-1} H^0(C,\omega_C(L_1^i));\ \ H^{0,1}(C')\cong \oplus_{i=0}^{p-1} H^1(C,L_1^{-i}) \]
\[ H^{1.0}(D')\cong\oplus_{i=0}^{p-1} H^0(D,\omega_D(L_2^i));\ \ H^{0,1}(D') \cong\oplus_{i=0}^{p-1} H^1(D,L_2^{-i}) .\]
We set
 \[V_{i,j}\cong H^0(C,\omega_C(L_1^i))\otimes H^0(D,\omega_D(L_2^j)),\]
 \[V'_{i,j}\cong H^1(C,L_1^i))\otimes H^1(D,L_2^j).\]

 For the sake of notation we will use the above isomorphism as an identification. Then we will often omit the pull-backs, for instance we  write  $H^0(C,\omega_C(L_1^i))$ instead of  $f_1^\ast H^0(C,\omega_C(L_1^i)),$ etc.
\begin{equation}
H^{2,0}(X)=\oplus_{ij=0}^{p-1}V_{i,j};\ \ \ H^{0,2}(X) =\oplus_{ij=0}^{p-1}V'_{i,j}. \label{dec}
\end{equation}
 One has $V_{i,j}=V_{i+p,j}=V_{i,j+p} $ and by the complex conjugation \[\overline{V_{i,j}}=V'_{-i,-j}=V'_{p-i,p-j}.\]
Then we get the following proposition.

\begin{proposition} \label{hs20}
Let $C$ and $D$ be very general curves of genus  $\geq 3.$ Let $\Lambda\subset H^2(C'\times D')$ be a Hodge substructure.
If $\Lambda^{2,0}\neq 0$ then for some index ${a,b}$, we have $\Lambda^{2,0}\supset V_{a,b}.$
\end{proposition}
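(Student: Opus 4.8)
The plan is to run an infinitesimal variation of Hodge structure (IVHS) argument, using Lemmas \ref{ind} and \ref{sur} to control the relevant cup--product maps, and to recast the conclusion as a statement in semisimple module theory. Write $A_i=H^0(C,\omega_C(L_1^i))$ and $B_j=H^0(D,\omega_D(L_2^j))$, so that $V_{i,j}=A_i\otimes B_j$ and $H^{2,0}(X)=\bigoplus_{i,j}A_i\otimes B_j$; set also $A_i'=H^1(C,L_1^i)$ and $B_j'=H^1(D,L_2^j)$, so that $V'_{i,j}=A_i'\otimes B_j'$ and $H^{0,2}(X)=\bigoplus_{i,j}V'_{i,j}$. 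Since $C$ and $D$ are very general the covers deform in a family and, by the standard monodromy/Noether--Lefschetz principle, any Hodge substructure at the very general point underlies a sub--variation; hence $\Lambda$ is stable under the IVHS, i.e. cup product with $H^1(T_C)\oplus H^1(T_D)\subset H^1(T_X)$ carries $\Lambda^{p,q}$ into $\Lambda^{p-1,q+1}$. For $\eta\in H^1(T_C)$ the cup product $\phi_\eta$ acts on the first factor only and preserves the eigenvalue $i$, sending $A_i\otimes B_j$ to $A_i'\otimes B_j$ by $(\eta\cdot-)\otimes\mathrm{id}$; likewise $\psi_\theta$ for $\theta\in H^1(T_D)$. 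The composite $\psi_\theta\circ\phi_\eta$ then maps $\Lambda^{2,0}$ into $\Lambda^{0,2}=\overline{\Lambda^{2,0}}$ and sends $V_{i,j}$ into $V'_{i,j}$, preserving the index $(i,j)$.

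First I would fix generic $\eta_0\in H^1(T_C)$ and $\theta_0\in H^1(T_D)$ so that $\phi_{\eta_0}$ and $\psi_{\theta_0}$ are isomorphisms. By Lemma \ref{sur}(2) each block $A_i\la A_i'$ (resp. $B_j\la B_j'$) is an isomorphism for $\eta$ (resp. $\theta$) outside a proper subvariety of $H^1(T_C)$ (resp. $H^1(T_D)$); as there are finitely many blocks a common $\eta_0,\theta_0$ exist, and $T_0:=\psi_{\theta_0}\circ\phi_{\eta_0}$ is an index--preserving isomorphism $H^{2,0}(X)\la H^{0,2}(X)$ with $T_0(\Lambda^{2,0})\subseteq\overline{\Lambda^{2,0}}$; comparing dimensions forces equality. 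Setting $S_{\eta,\theta}:=T_0^{-1}\circ(\psi_\theta\circ\phi_\eta)$, every $S_{\eta,\theta}$ is an index--preserving endomorphism of $H^{2,0}(X)$ stabilising $P:=\Lambda^{2,0}$, so $P$ is a module over the subalgebra $\mathcal{A}\subset\mathrm{End}(H^{2,0}(X))$ generated by all $S_{\eta,\theta}$. Writing $S_{\eta,\theta}=R_\eta\otimes R'_\theta$ with $R_\eta=\phi_{\eta_0}^{-1}\phi_\eta$ block--diagonal on $\bigoplus_iA_i$ and $R'_\theta=\psi_{\theta_0}^{-1}\psi_\theta$ block--diagonal on $\bigoplus_jB_j$, closure under products and sums gives $\mathcal{A}=\mathcal{A}_C\otimes\mathcal{A}_D$, where $\mathcal{A}_C$ (resp. $\mathcal{A}_D$) is the algebra generated by the $R_\eta$ (resp. $R'_\theta$).

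It then suffices to prove $\mathcal{A}_C=\prod_i\mathrm{End}(A_i)$ and $\mathcal{A}_D=\prod_j\mathrm{End}(B_j)$: granting this, $\mathcal{A}=\prod_{i,j}\mathrm{End}(V_{i,j})$, the $V_{i,j}$ are pairwise non--isomorphic irreducible $\mathcal{A}$--modules, and every nonzero submodule of $\bigoplus_{i,j}V_{i,j}$ is a sum of some of them; since $P\neq 0$ it contains at least one $V_{a,b}$, which is the assertion (indeed it shows $\Lambda^{2,0}$ is a sum of eigenspaces). For the first equality I would argue blockwise. Irreducibility of $\mathcal{A}_C$ on each $A_i$ is immediate from Lemma \ref{sur}(1): if $0\neq S\subseteq A_i$ were invariant, then for $0\neq\alpha\in S$ the set $\{\eta\cdot\alpha:\eta\in H^1(T_C)\}$ would lie in the fixed subspace $\phi_{\eta_0}(S)$, forcing $\phi_{\eta_0}(S)=A_i'$ and hence $S=A_i$. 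By Burnside's theorem $\mathcal{A}_C$ surjects onto each $\mathrm{End}(A_i)$. To upgrade this to the full product I must show the blocks $A_i$ are pairwise non--isomorphic as $\mathcal{A}_C$--modules; this is where Lemma \ref{ind} and the PTF hypothesis enter. An intertwiner $\iota\colon A_i\xrightarrow{\sim}A_{i'}$ with $i\neq i'$ would give $\eta\cdot\alpha=0\iff\eta\cdot(\iota\alpha)=0$ for all $\eta$; but $L_1^i\neq L_1^{i'}$ are distinct torsion bundles, so Lemma \ref{ind} produces $\eta$ with $\eta\cdot(\iota\alpha)=0$ and $\eta\cdot\alpha\neq 0$, a contradiction. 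Hence the $A_i$ are pairwise non--isomorphic and Jacobson density gives $\mathcal{A}_C=\prod_i\mathrm{End}(A_i)$; the same argument applies to $D$.

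The main obstacle is precisely this last point, ruling out identifications between distinct character blocks, since it is what forces $\Lambda^{2,0}$ to be a sum of full eigenspaces rather than some ``diagonal'' subspace meeting several $V_{i,j}$ in proper subspaces. Lemma \ref{ind}, resting on the PTF property of very general curves, is tailored to break exactly these identifications, and it is the decomposable (rank one) vectors inside each irreducible $V_{a,b}$ that play the role of highest weights. The companion input, that a Hodge substructure at a very general point is stable under the IVHS, is the standard monodromy argument, and I would invoke it as such rather than reprove it.
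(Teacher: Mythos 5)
Your proof is correct, and it takes a genuinely different route from the paper's, even though both rest on the same three inputs: the IVHS-stability of $\Lambda$ (i.e. $\eta\cdot\zeta\cdot\Lambda^{2,0}\subset\Lambda^{0,2}$ for $\eta\in H^1(T_C)$, $\zeta\in H^1(T_D)$), Lemma~\ref{ind}, and Lemma~\ref{sur}. The paper argues by explicit tensor-rank reduction: starting from an arbitrary $\Omega\in\Lambda^{2,0}$ it uses Lemma~\ref{ind} to kill one factor of a component while preserving another, Lemma~\ref{sur}(2) to keep the $D$-side injective, and complex conjugation to return from $\Lambda^{0,2}$ to $\Lambda^{2,0}$, iterating until a decomposable element $\alpha\wedge\beta\in V_{a,b}\cap\Lambda^{2,0}$ is isolated; then Lemma~\ref{sur}(1) sweeps out all decomposable elements of $V'_{a,b}$, so that $V'_{a,b}\subset\Lambda^{0,2}$ and hence $V_{p-a,p-b}\subset\Lambda^{2,0}$. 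You instead convert the same data into semisimple module theory: after normalizing by a fixed $T_0=\psi_{\theta_0}\circ\phi_{\eta_0}$ (your simultaneous-isomorphism claim goes slightly beyond the literal statement of Lemma~\ref{sur}(2), but is legitimate, since being an isomorphism on a given block is a nonempty Zariski-open condition on $\eta$ and there are finitely many blocks) and checking $T_0(\Lambda^{2,0})=\Lambda^{0,2}$ by a dimension count, you obtain an algebra $\mathcal{A}=\mathcal{A}_C\otimes\mathcal{A}_D$ of index-preserving operators stabilizing $\Lambda^{2,0}$, and Burnside plus Jacobson density (block irreducibility from Lemma~\ref{sur}(1), pairwise non-isomorphism of blocks from Lemma~\ref{ind} and the PTF property) identify $\mathcal{A}=\prod_{i,j}\mathrm{End}(V_{i,j})$, whence every invariant subspace is a direct sum of blocks. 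Your route buys a stronger conclusion: $\Lambda^{2,0}$ is exactly a sum of the $V_{a,b}$, which essentially recovers in one stroke the paper's later classification of the irreducible Hodge substructures of $H^2(C'\times D')$ (the proposition following Theorem~\ref{kodsh}). What the paper's route buys is elementarity---no appeal to representation theory and no need for the simultaneous normalization $T_0$, since conjugation does that bookkeeping---together with making explicit the guiding idea that decomposable vectors play the role of highest weight vectors.
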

\begin{proof}
We remark that the deformations of $C'\times D'$ that preserve the coverings correspond to the deformations of $C$ and of $D.$
In fact let $\pi_1: \cC \to \Delta $ and $\pi_2 : \cD \to \Delta'$ be Kuranishi families of
 $C$ and $D$ respectively, then $\pi: \cC \times \cD \to \Delta\times \Delta'$ is a Kuranishi family of $C\times D.$
 We have $\Delta\times \Delta'$ contractible and a point $p\in  \Delta\times \Delta'$
such that $\pi^{-1}(p)=C\times D.$ The tangent space $T$ of $\Delta\times\Delta'$ at $p$ is identify $H^1(C,T_C)\oplus H^1(D,T_D)$ via the Kodaira-Spencer map.
Since  $\Delta\times \Delta'$ is contractible, the coverings $f_1$ and $f_2$
define a family $\pi': \cC'\times \cD' \to \Delta\times \Delta'$  and an \'etale covering $\cC'\times \cD'\to \cC\times \cD.$ This gives the Kuranishi family of the covering map $f_1\times f_2.$  In particular this induces an injective map
 $ T=H^1(C,T_C)\oplus H^1(D,T_D)\to H^1(C',T_{C'})\oplus H^1(D',T_{D'})$ that gives an identification of $T$ with the infinitesimal deformations of $C'\times D'$  preserving the covering.
\medskip

We shall use a basic result from the infinitesimal variation of  Hodge structures (cf. \cite{V}). Let $\cH =R^2\pi_\ast \bC$ be the variation of Hodge structures defined by the Kuranishi family on the second complex cohomology. Let ${\co}$ be the Gauss Manin  connection defined on the sections of  $\cH.$  Let $\mathcal L \subset \cH$ be a Hodge substructure.
 It follows that $\co$ must preserve the sections of $\mathcal L.$
Consider the Hodge decomposition
$$H^2(C'\times D',\bC)=H^{2,0}\oplus H^{1,1}\oplus H^{0,2}.$$
The infinitesimal variation
 $\co^{1.1}: H^{2,0}\times T\to H^{1,1}$ and $\co^{0.2}:H^{1,1}\times T\to H^{0,2}$
are defined via the Kodaira-Spencer map.
It means  that if $s$ is a section of $\cH^{p,q}\subset \cH $ and $\phi\in T$
then $$(\co_\phi s)^{p-1.q+1}(p)=\phi\cdot s(p),$$
where $\cdot$ is the cup-product.
This follows from the Griffiths transversality theorem (cf. Section 2.2 in \cite{V}).
Let $\Lambda=\mathcal L_p\subseteq H^2(C'\times D',\bC)$ be the Hodge substructure. Then we have
$\Lambda^{p,q}\subset H^{p,q},$ and
then $\co^{1.1}: \Lambda^{2,0}\times T\to \Lambda^{1,1}$ and  $\co^{0.2}:\Lambda^{1,1} \times T\to \Lambda^{0,2}.$

Therefore for any pair of the infinitesimal deformations, $\eta$ and $\zeta\in T,$  we get:\begin{equation}\eta \cdot \zeta \cdot \Lambda^{2,0}\subset \Lambda^{0,2}\label{cond}.\end{equation}
We will consider $\eta\in H^1(T_C)\subset T$
and $\zeta\in H^1(T_D)\subset T.$
\medskip

Our first aim is to show that if the space $\Lambda^{2,0}\neq 0$ then it contains some non trivial decomposable element.
For any $\Gamma \in \Lambda^{2,0}\subset H^{2,0}(X),$  we write $\Gamma=\sum\gamma_{i,j},$
$\gamma_{i,j}\in V_{i,j}.$
First we will show that there is an element $\Gamma\in  \Lambda^{2,0} $ such that the components $ \gamma_{i,j}\in V_{i,j}$ are all decomposable:
$\gamma_{i,j}=\alpha_{ij}\wedge \beta_{ij}.$ (Since our element are forms we use the wedge product instead of $\otimes$).
Starting with any $\Omega=\sum\omega_{i,j} \neq 0$ and take an index $i,j$ such that $\omega_{i,j}\neq 0$. Write
\[\omega_{i,j}=\sum_{k=1}^s \alpha_k\wedge\beta_k.\]
We assume that the $\alpha_k$ and the $\beta_k$ are independent,
that is the rank of tensor $\omega_{i,j}$ to be $s.$ If $s=1$ there is noting to do for the index $i.j$. Assume $s>1.$
In particular $\alpha_1$ and $\alpha_2$ are not proportional.
Now by Lemma~\ref{ind} we find $\eta\in H^1(C,T_C)$  such that $\eta\cdot{\alpha_1}=0$ and $\eta\cdot{\alpha_2}\neq 0 $. And by Lemma~\ref{sur} we have $\zeta\in H^1(D,T_D)$ such that
$\zeta:H^0(D,\omega_D(L_2^j))\to H^1(D, L_2^j)$ is an isomorphism.
Since the Hodge structure $\Lambda$  must deform with $C\times D,$ $\Theta=\zeta\cdot (\eta\cdot  \Omega)=\eta\cdot (\zeta\cdot  \Omega)\in \Lambda^{0,2}.$
The infinitesimal variation of Hodge structure is given by the cup-product, and the
cup product commutes with the decomposition (\ref{dec}) we get
that $ \Theta_{i,j}=\zeta\cdot (\eta\cdot  \omega_{i,j}).$
Then
\begin{equation}\label{IVH2}
\Theta_{i,j}=\sum^s_{k=1}\eta\cdot \alpha_k\wedge\zeta\cdot \beta_k.\end{equation}
 We remark that the rank cannot increase, moreover since $\eta\cdot \alpha_1=0$, $\Theta$ has rank $\leq s-1:$
\[\Theta_{i,j}=\sum^s_{k=2}\eta\cdot \alpha_k\wedge\zeta\cdot \beta_k.\]
 Since $\zeta$ is an isomorphism the vectors $\zeta\cdot \beta_k$ are all independent and
$\eta\cdot \alpha_2\neq 0$, and then  we obtain that $\Theta_{i,j}\neq 0.$ Now we use the complex conjugation.
We define $\widetilde{\Omega}= \overline {\Theta}\in \Lambda^{2,0}$, moreover $\widetilde{\Omega'}_{p-i,p-j}:$
\[\widetilde{\Omega}_{p-i,p-j}=\sum^s_{k=2}\overline{(\eta\cdot \alpha_k)}\wedge\overline{(\zeta\cdot \beta_k)}.\]
is non trivial
of rank $\leq s-1.$ As the formula (\ref{IVH2}) shows the rank of the component of our tensor cannot increase under the cup product action. We can repeat the above operation for any index $ i',j'.$ Finally we find
$0\neq\Omega'\in \Lambda^{2,0}$ such that
$\Omega'=\sum \alpha_{ij}\wedge \beta_{ij}.$
\medskip

Next we would like to show that we can find a decomposable non trivial element
$\alpha_{ij}\wedge\beta_{ij} \in \Lambda^{2,0}.$
We start this time from $\Omega'=\sum_{ij}\alpha_{ij}\wedge \beta_{ij}.$ If its rank is one then we have done. We may assume the rank is $r\geq 2,$ and then that for two pairs of different indices $ij$ and $i'j'$
$\alpha_{ij}\wedge \beta_{ij}\neq 0$ and $\alpha_{i'j'}\wedge \beta_{i'j'}\neq 0.$ We have
$i\neq i'$  or  $j\neq j'.$ By the symmetry of the hypothesis on $C$ and $D$ we may assume $i\neq i'.$ By Lemma~\ref{ind} we can find
$\eta\in H^1(C,T_C)$ such that $\eta\cdot\alpha_{ij}\neq 0$ and $\eta \cdot \alpha_{i'j'}=0,$ and $\zeta\in H^1(D,T_D)$ such that $\zeta\cdot \beta_{ij}\neq 0.$ It follows then $\Theta=\zeta\cdot (\eta\cdot  \Omega)\in \Lambda^{0,2}:$
\[\Theta'_{i,j}=\zeta\cdot (\eta\cdot  \Omega')_{i,j}=\eta\cdot \alpha_{ij}\wedge\zeta\cdot \beta_{ij}\neq 0\]
and $\Theta'_{i',j'}=0.$  Taking $\Omega''=\overline \Theta'$ we get a non trivial element in $\Lambda^{2,0}$ with rank $0<r'<r$. Repeating the operation we can find an element
\[0\neq \alpha\wedge\beta\in V_{a,b}\cap \Lambda^{2,0}.\] \label{IVHS}

Finally we see that $\eta\cdot\alpha\wedge \zeta \cdot \beta $ belongs to $\Lambda^{0,2}$ for all $\eta\in H^1(C,T_C)$ and
$\zeta \in H^1(D,T_D).$ Using the first part of Lemma~\ref{sur}  we get that all the decomposable elements in $V'_{a,b}$ are in $\Lambda^{0,2}.$ It concludes
$V'_{a,b}\subset \Lambda^{0,2}.$ Therefore $V_{p-a,p-b}\subset \Lambda^{2,0}.$
\end{proof}

Now we can prove the following.
\begin{theorem} \label{kodsh}
Let $C$ and $D$ are very general curves of genus $g\geq 3$. Let $f_1:C'\to C$ and $f_2:D'\to D$ as above where $f_i$ for $i=1, 2$ are \'etale coverings of prime order $p$. Let $X=C'\times D'.$ Assume that $\Lambda\subset H^2(X) $ is a Hodge substructure such that $\Lambda^{2,0} \neq 0$ ($\Lambda^{2,0}\subset H^{2,0}=H^0(X,K_X)$). Let $|\Lambda^{2,0}|$ be the corresponding sublinear series of $H^0(X, K_X)$.
Then the image of the rational map $X=C'\times D'\dashrightarrow |\Lambda^{2,0}|$  has dimension $2.$
\end{theorem}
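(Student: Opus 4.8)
The plan is to read the statement off from Proposition~\ref{hs20} together with a short dimension count; in effect the entire analytic difficulty has already been absorbed into Proposition~\ref{hs20}, and what remains is elementary projective geometry. First, since $\Lambda^{2,0}\neq 0$, Proposition~\ref{hs20} produces a pair of indices $(a,b)$ with $V_{a,b}\subset \Lambda^{2,0}$. As $V_{a,b}$ is a linear subspace of $\Lambda^{2,0}\subset H^0(X,K_X)$, the rational map $\phi_{V_{a,b}}\colon X\dashrightarrow |V_{a,b}|$ factors as the composition of $\phi_{\Lambda^{2,0}}\colon X\dashrightarrow |\Lambda^{2,0}|$ with the linear projection dual to the inclusion $V_{a,b}\hookrightarrow \Lambda^{2,0}$. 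A linear projection never increases the dimension of the image, so $\dim\overline{\phi_{V_{a,b}}(X)}\le \dim\overline{\phi_{\Lambda^{2,0}}(X)}\le 2$, the last inequality because $X$ is a surface. Hence it suffices to prove that $\overline{\phi_{V_{a,b}}(X)}$ has dimension $2$.

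Next I would identify $\phi_{V_{a,b}}$ as a Segre product. Under the fixed identification $V_{a,b}\cong H^0(C,\omega_C(L_1^a))\otimes H^0(D,\omega_D(L_2^b))$, a decomposable section $\alpha\wedge\beta$ has value $\alpha(x)\,\beta(y)$ at $(x,y)\in C'\times D'$ (in suitable local trivializations of $K_X=p_1^\ast\omega_{C'}\otimes p_2^\ast\omega_{D'}$). Consequently $\phi_{V_{a,b}}$ is the composition of $\phi_1\times\phi_2$ with the Segre embedding $\bP^{n_1}\times\bP^{n_2}\hookrightarrow \bP(V_{a,b}^\vee)$, where $\phi_1\colon C'\dashrightarrow \bP^{n_1}$ and $\phi_2\colon D'\dashrightarrow \bP^{n_2}$ are the maps attached to the pulled-back linear systems of $H^0(C,\omega_C(L_1^a))$ and $H^0(D,\omega_D(L_2^b))$. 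Since the Segre embedding is a closed immersion, $\dim\overline{\phi_{V_{a,b}}(X)}=\dim\overline{\phi_1(C')}+\dim\overline{\phi_2(D')}$.

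It then remains to check that each of $\phi_1,\phi_2$ is non-constant, i.e.\ has one-dimensional image. Because $L_1^a$ is torsion, $\deg\omega_C(L_1^a)=2g-2\ge 4$, and Serre duality gives $h^0(C,\omega_C(L_1^a))=g-1\ge 2$ when $L_1^a\neq\cO_C$ and $=g\ge 3$ when $L_1^a=\cO_C$; the same holds on $D$. Thus each system has at least two linearly independent sections, and a map of a curve to projective space given by two independent sections of a line bundle cannot be constant---otherwise the two sections would be proportional. Hence $\overline{\phi_1(C')}$ and $\overline{\phi_2(D')}$ are curves, so $\dim\overline{\phi_{V_{a,b}}(X)}=1+1=2$, and therefore $\dim\overline{\phi_{\Lambda^{2,0}}(X)}=2$, as required.

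Finally, I expect the only point needing care---rather than a genuine obstacle---to be the bookkeeping around base points: the linear systems of $H^0(C,\omega_C(L_1^a))$ and $H^0(D,\omega_D(L_2^b))$ may fail to be base-point free (for instance $\omega_C$ on special curves), but this is harmless, since the image dimension is computed on the open locus where the rational maps are defined and non-constancy follows purely from having two independent sections. The substantive work is entirely contained in Proposition~\ref{hs20}.
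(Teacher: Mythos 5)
Your proposal is correct and follows essentially the same route as the paper: invoke Proposition~\ref{hs20} to find $V_{a,b}\subset\Lambda^{2,0}$, reduce to the map given by $|V_{a,b}|$, and conclude via the Segre embedding together with $h^0(C,\omega_C(L_1^a))\geq 2$ and $h^0(D,\omega_D(L_2^b))\geq 2$. You merely spell out the steps the paper leaves implicit (the linear-projection reduction and the non-constancy of the two factor maps), which is fine.
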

\begin{proof}
From Proposition~\ref{hs20} we can find indices $i,j$ such that $\Lambda^{2,0}\supset V_{i,j} =H^0(C,\omega_C(L_1^i)))\otimes H^0(D,\omega_D(L_2^j))$
It is enough to show that the image of \[X\dashrightarrow  |V_{i.j}|\] has dimensions $2.$ This is clear since it factorizes through
\[ C\times D\dashrightarrow |H^0(C,\omega_C(L_1^i))\otimes  H^0(D,\omega_D(L_2^j))|.\]
Since $C$ and $D$ have both genus $\geq 3$, $h^0(C,\omega_C(L_1^i))\geq 2$ and  $h^0(D,\omega_D(L_2^j))\geq 2.$
The result follows by using the Segre embedding.
\end{proof}

For the sake of completeness, we can consider $P(C',C)\subset H^1(C)$, and $P(D',D)\subset H^1(D)$ respectively. These are the Hodge structures corresponding to the kernel of the norm mappings $H^1(C')\to H^1(C)$, and $H^1(D')\to H^1(D))$ respectively.
Clearly $P(C',C)$  corresponds to the Prym variety of the covering $C'\to C.$
The result of Theorem~\ref{IVHS} and the examination of the monodromy on the torsion line bundle imply the following proposition.

\begin{proposition}
Let $C,D,C'D'$ be as above. Assume that the covering $C'\to C$ and $D'\to D$ are non-trivial. There are exactly $4$ irreducible Hodge substructures of $H^2(C'\times D')$  with non trivial $(2,0)$ part:
$H^1(C)\otimes H^1(D),$ $P(C',C)\otimes H^1(D),$ $H^1(C)\otimes P(D',D)$ and $P(C',C)\otimes P(D',D).$
\end{proposition}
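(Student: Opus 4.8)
The plan is to decompose $H^1(C')$ and $H^1(D')$ into irreducible Hodge substructures under the action of the deck group $\bZ/p\bZ$, and then use the Künneth formula together with Proposition~\ref{hs20} to pin down exactly which tensor products can carry a nontrivial $(2,0)$ part. First I would recall that the norm map $H^1(C')\to H^1(C)$ is a morphism of Hodge structures, and that the covering $f_1$ induces a pullback $f_1^\ast\colon H^1(C)\hookrightarrow H^1(C')$ that splits the norm. This gives the decomposition $H^1(C')=H^1(C)\oplus P(C',C)$, and symmetrically $H^1(D')=H^1(D)\oplus P(D',D)$. The key structural input is the monodromy (equivalently, the $\bZ/p\bZ$-action on the eigenspace decomposition (\ref{eigen})): the factor $H^1(C)$ is exactly the invariant part, corresponding to the index $i=0$ summand $H^0(C,\omega_C)\oplus H^1(C,\cO_C)$, while $P(C',C)$ is the sum of the nontrivial eigenspaces $i=1,\dots,p-1$. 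Because $p$ is prime, the monodromy action permutes these nontrivial eigenspaces transitively (up to the conjugate pairing $i\leftrightarrow p-i$), so $P(C',C)$ is irreducible as a Hodge structure with monodromy, and similarly for $P(D',D)$.

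Next I would take the Künneth decomposition $H^2(C'\times D')=\bigoplus (H^a(C')\otimes H^b(D'))$ and focus on the part $H^1(C')\otimes H^1(D')$, which is the only summand that can contribute to $H^{2,0}$ beyond the classes of algebraic cycles. Substituting the two splittings, this breaks into the four tensor products listed in the statement:
\[
H^1(C')\otimes H^1(D')=\bigl(H^1(C)\oplus P(C',C)\bigr)\otimes\bigl(H^1(D)\oplus P(D',D)\bigr).
\]
Each of the four pieces $H^1(C)\otimes H^1(D)$, $P(C',C)\otimes H^1(D)$, $H^1(C)\otimes P(D',D)$, and $P(C',C)\otimes P(D',D)$ is a Hodge substructure with nontrivial $(2,0)$ part, since in each case both genera are $\geq 3$ so each factor has a nonzero $(1,0)$ part. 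I would then argue that each of these four is in fact \emph{irreducible}: a proper Hodge substructure would, by Proposition~\ref{hs20}, have to contain a full summand $V_{a,b}=H^0(C,\omega_C(L_1^a))\otimes H^0(D,\omega_D(L_2^b))$ in its $(2,0)$ part, and one checks using the transitivity of the monodromy on the eigenspaces (together with the very general hypothesis, which forces the only Hodge classes in $H^{1,1}$ to be the obvious algebraic ones) that the monodromy orbit of such a $V_{a,b}$ already spans the whole $(2,0)$ part of the piece in question. Conversely, any irreducible Hodge substructure of $H^2$ with nonzero $(2,0)$ part must by Proposition~\ref{hs20} contain some $V_{a,b}$, hence meets one of the four pieces nontrivially, and by irreducibility of both it must coincide with that piece; this gives the count of exactly four.

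The main obstacle I expect is the rigorous verification that each of the four tensor products is genuinely irreducible as a Hodge structure, and that there are no \emph{further} irreducible pieces hiding in the mixed-eigenspace summands $V_{a,b}$ with $0<a,b<p$. The delicate point is that $P(C',C)\otimes P(D',D)$ is a priori a large space, and a posteriori one must rule out that the monodromy and the IVHS split it into smaller Hodge-theoretic blocks. Here I would lean on Proposition~\ref{hs20} in its sharp form: any nonzero Hodge substructure contains a decomposable highest-weight vector generating a full $V_{a,b}$, and the $\bZ/p\bZ\times\bZ/p\bZ$ monodromy acts on the index set $(a,b)$ with a single orbit inside each of the four regimes ($a=0$ or $a\neq 0$, crossed with $b=0$ or $b\neq 0$). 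Controlling this orbit structure—and confirming that the very general choice of $C$ and $D$ (so that the Prym-type Hodge structures have no extra endomorphisms) prevents any accidental splitting—is the technical heart of the argument, and is precisely where the ``examination of the monodromy on the torsion line bundle'' alluded to in the statement does the work.
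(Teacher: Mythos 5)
Your reduction to the four K\"unneth pieces via the norm/pullback splitting $H^1(C')=H^1(C)\oplus P(C',C)$, and the check that each piece is a Hodge substructure with nonzero $(2,0)$ part, are correct; and since the paper states this proposition without proof (it only invokes the IVHS result and ``monodromy''), the whole burden falls on the step you yourself flag as the technical heart. That step, as you formulate it, fails for $p\geq 3$. The deck group $\bZ/p\bZ\times\bZ/p\bZ$ does not permute the summands $V_{a,b}$ at all: these are its eigenspaces, so it acts on each of them by a scalar. Moreover, any monodromy of the family of pairs $\bigl((C,L_1),(D,L_2)\bigr)$ commutes with the globally defined deck action, hence also preserves each character pair $(a,b)$. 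The only symmetry of the indices compatible with the $\bQ$-structure is the Galois action of $(\bZ/p\bZ)^{*}$ on the $\bQ(\zeta_p)$-eigenspace decomposition, and it acts \emph{diagonally}, $(a,b)\mapsto(ta,tb)$. Its orbits on $\{(a,b):a,b\neq 0\}$ are the $p-1$ sets of constant ratio $ab^{-1}$, not a single orbit; so the orbit of the $V_{a,b}$ produced by Proposition~\ref{hs20} spans only $\bigoplus_{t}V_{ta,tb}$, which is a proper subspace of $\bigl(P(C',C)\otimes P(D',D)\bigr)^{2,0}$ once $p\geq 3$.

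This is not a repairable gap: the statement itself is false for $p\geq 3$ (so is the paper's claim; note the proposition is never used in the paper's main results). Since $P(C',C)$ is killed by the norm, the deck generator $g_C$ acts on it with minimal polynomial $1+t+\cdots+t^{p-1}$, making $P(C',C)$ a $\bQ(\zeta_p)$-vector space, and likewise for $P(D',D)$. Hence $P(C',C)\otimes_{\bQ}P(D',D)$ is a module over $\bQ(\zeta_p)\otimes_{\bQ}\bQ(\zeta_p)\cong\prod_{k\in(\bZ/p\bZ)^{*}}\bQ(\zeta_p)$, whose idempotents are $\bQ$-linear combinations of the operators $(g_C^{*})^{m}\otimes(g_D^{*})^{n}$ and therefore are morphisms of Hodge structures. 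They split $P(C',C)\otimes P(D',D)$ into the $p-1$ rational Hodge substructures $W_k=\ker\bigl(g_C^{*}\otimes 1-1\otimes(g_D^{*})^{k}\bigr)\cap\bigl(P(C',C)\otimes P(D',D)\bigr)$, with $W_k^{2,0}=\bigoplus_{j\neq 0}V_{kj,j}\neq 0$ (indices mod $p$), and each $W_k$ is proper when $p\geq 3$. This is exactly the mechanism that splits $H^1(A)\otimes H^1(B)$ for two abelian varieties with a common CM field (Weil classes). Consequently, for $p\geq 3$ there are at least $3+(p-1)>4$ distinct irreducible Hodge substructures with nontrivial $(2,0)$ part, and ``exactly four'' can hold only for $p=2$, where $(\bZ/2\bZ)^{*}$ is trivial and your outline is essentially the intended argument. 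Finally, even for $p=2$ there is a secondary gap: irreducibility also requires excluding proper substructures purely of type $(1,1)$, i.e. showing that $H^1(C')\otimes H^1(D')$ contains no nonzero Hodge classes (equivalently ${\rm Hom}(JC',JD')\otimes\bQ=0$ for very general $C,D$); this is true but needs its own argument and does not follow from Proposition~\ref{hs20}, which requires $\Lambda^{2,0}\neq 0$.
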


We remark that
\[(P(C',C)\otimes P(D',D))^{2,0}= \oplus V_{i,j},\ \ 0<i,j<p.\]

\subsection{Torsion freeness}

This subsection proves our main technical result, which is Theorem~\ref{tor}. Let $C$ and $D$ be two very general curves of genus $g\geq 3$.  Suppose there is a dominant rational map
$f:C\times D\dashrightarrow S$ where $S$ is a smooth projective surface with $0\le {\rm kod}(S)\le 1.$

\begin{lemma} \label{pgs}
We have $p_g(S)=q(S)=0$ and $S$ has an elliptic fibration $\pi: S\to P^1.$
\end{lemma}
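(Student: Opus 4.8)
The plan is to prove the two vanishings $p_g(S)=0$ and $q(S)=0$ separately, and then to read off the elliptic fibration from the Enriques--Kodaira classification. Throughout I would resolve $f$ by a composition of blow-ups $\sigma\colon \tilde X\to C\times D$ together with a morphism $\tilde f\colon \tilde X\to S$; since holomorphic two-forms are birational invariants, $\sigma^\ast$ identifies $H^{2,0}(\tilde X)$ with $H^{2,0}(C\times D)=H^{1,0}(C)\otimes H^{1,0}(D)$, and $\tilde f^\ast$ becomes an injection of Hodge structures $H^0(S,K_S)=H^{2,0}(S)\hookrightarrow H^{1,0}(C)\otimes H^{1,0}(D)$.

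For $p_g(S)=0$ I would argue by contradiction. By the K\"unneth decomposition the $(2,0)$-part of $H^2(C\times D)$ lies in the summand $H^1(C)\otimes H^1(D)$. The essential input is the Hodge-theoretic analysis of \cite{BaPi}: for very general $C$ and $D$ the factors $H^1(C)$ and $H^1(D)$ are simple and non-isomorphic, so $H^1(C)\otimes H^1(D)$ is an \emph{irreducible} Hodge structure and every nonzero sub-Hodge structure with nonzero $(2,0)$-part is all of $H^1(C)\otimes H^1(D)$. Hence, if $p_g(S)>0$, the sub-Hodge structure $\tilde f^\ast H^2(S)\cap\bigl(H^1(C)\otimes H^1(D)\bigr)$ is everything, forcing $\tilde f^\ast H^0(S,K_S)=H^0(\tilde X,K_{\tilde X})$. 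Then the composite $\phi_{|K_S|}\circ\tilde f$ is defined by the complete system $|K_{\tilde X}|$, so its image is the canonical image of $C\times D$, namely $\phi_{\omega_C}(C)\times\phi_{\omega_D}(D)$, a surface. But this image is contained in $\phi_{|K_S|}(S)$, whose dimension is at most ${\rm kod}(S)\le 1$. This contradiction gives $p_g(S)=0$.

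For $q(S)=0$ I would combine two facts. On one hand, $\mathrm{Alb}(\tilde X)=\mathrm{Alb}(C\times D)=JC\times JD$, and since $\tilde f$ is surjective, $\mathrm{Alb}(S)$ is a quotient of $JC\times JD$; because $JC$ and $JD$ are simple and non-isogenous, the only quotients up to isogeny are $0,\ JC,\ JD,\ JC\times JD$, whence $q(S)\in\{0,g_C,g_D,g_C+g_D\}$. On the other hand, passing to a minimal model and using that minimal surfaces of non-negative Kodaira dimension satisfy $\chi(\cO_S)\ge 0$, the equality $\chi(\cO_S)=1-q(S)+p_g(S)=1-q(S)$ gives $q(S)\le 1$. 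As $g_C,g_D\ge 3$, these two constraints force $q(S)=0$.

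Finally, with $p_g(S)=q(S)=0$ and $0\le{\rm kod}(S)\le 1$ the Enriques--Kodaira classification finishes the argument: if ${\rm kod}(S)=0$ the minimal model is an Enriques surface, which carries an elliptic pencil over $\bP^1$; if ${\rm kod}(S)=1$ the Iitaka fibration is an elliptic fibration $S_{\min}\to B$ with $g(B)\le q(S)=0$, hence $B=\bP^1$. In either case composing with the contraction $S\to S_{\min}$ yields the required $\pi\colon S\to \bP^1$. The main obstacle is the irreducibility of the Hodge structure $H^1(C)\otimes H^1(D)$ used in the first step; this is precisely where the genericity of $C$ and $D$ enters and where I rely on \cite{BaPi}, every remaining step being classification of surfaces.
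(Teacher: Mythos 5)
Your proof is correct, and while your $p_g(S)=0$ step coincides with the paper's (both rest on the irreducibility of the transcendental Hodge structure of $H^2(C\times D)$ from \cite{BaPi}, plus the observation that $f^\ast H^{2,0}(S)=H^{2,0}(C\times D)$ would force the two-dimensional canonical image of $C\times D$ to factor through the at most one-dimensional canonical image of $S$), your treatment of $q(S)=0$ takes a genuinely different route. The paper argues geometrically: if $q(S)>0$, then since $p_g(S)=0$ the Albanese map $a\colon S\to alb(S)$ is a fibration onto a curve $E$ of genus $q(S)>0$; composing with $f$ gives maps $C\to E$ and $D\to E$, and very generality (the Riemann--Hurwitz theorem quoted in the introduction) forces one of them, say $C\to E$, to be an isomorphism and the other constant; then $q(S)=g_C\geq 3$ rules out ${\rm kod}(S)=0$, so $a$ is an elliptic fibration, and the commutative fibration diagram makes the very general curve $D$ dominate an elliptic fiber, a contradiction. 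You instead use Poincar\'e reducibility: $alb(S)$ is a quotient of $JC\times JD$, whose simple, non-isogenous factors give $q(S)\in\{0,g_C,g_D,g_C+g_D\}$, while $\chi(\cO_S)=1-q(S)\geq 0$ (valid for minimal surfaces of non-negative Kodaira dimension, and $\chi$ is a birational invariant) gives $q(S)\leq 1$. Your route is shorter and purely numerical, at the cost of invoking two standard facts the paper avoids at this point: simplicity and non-isogeny of Jacobians of very general curves (essentially the same genericity as the rank-two N\'eron--Severi statement) and the classification inequality $\chi\geq 0$; the paper's route needs only the Riemann--Hurwitz statement for very general curves, which it has already recalled. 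One small caution: the inference ``$H^1(C)$, $H^1(D)$ simple and non-isomorphic, hence $H^1(C)\otimes H^1(D)$ irreducible'' is not a formal implication for rational Hodge structures (tensor products of simple Hodge structures can decompose, e.g.\ in CM situations); for very general curves it is true, but that is precisely the content of the lemmas of \cite{BaPi} which you correctly cite as the essential input, so no gap results.
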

\begin{proof}
We first prove that the geometric genus vanishes.
The lemma follows from by the irreducibility of the transcendental Hodge structure of $H^2$ of a very general product. That is for a very general product, the N\'eron Severi group $NS(C\times D)$ of $C\times D$ has rank two generated by the classes of $C\times \{x\}$ and $\{y\}\times D$ for $(y,x)\in C\times D.$ The Hodge structure orthogonal to the image of the algebraic cycle is irreducible (see for instance Lemma 2.7 and Lemma 4.1 in \cite{BaPi} (or apply the argument of the previous subsection to $L_1=\cO_C$ and $L_2=\cO_D$). Since the map
$f^\ast :H^{2,0}(S)\to H^{2,0}(C\times D)$ is injective, then either $H^{2,0}(S)=0$ or $f^\ast$ is an isomorphism. The second case is impossible, otherwise the canonical map would factorize through $g$ and the canonical map of $S.$ Now the canonical map of $C\times D$ gives embedding, but the canonical image of $S$ is not a surface since Kodaira dimension of $S$ is $<2.$
Now we prove that $q(S)=0.$ Assume by contradiction that $q(S)>0.$ Let $a: S\to alb(S)$ be the albanese map. Since $p_g(S)=0$, the image of $a$ must be a smooth curve $E$ of genus $q(S)>0.$ The composition of $f$ with $a$ define a map $C\times D\to alb(S)$ and therefore $f_1:C\to E$ and $f_2:D\to E.$ Since $C$ and $D$ are very general it follows that one of  these is an isomorphism and the other one is constant. Suppose that $E\cong C$ and $f_2$ is constant, we have $q(S)=g_C>1.$ It follows that Kodaira dimension $S$ is $1$ and that $a:S\to E$ is an elliptic fibration. Set $E=C.$
Since $f_2$ is constant, we have a commutative fibration diagram
\[\xymatrix@C-1pc{
  C\times D \ar@{.>}[rr]^{f} \ar[dr]_{g} && S \ar[dl]^{a} \\
  & C.
}\]
It follows that the curve $D$ maps onto to elliptic fibration $a.$ Since $D$
is very general this is impossible, and we obtain a contradiction.

The final statement follows from the theory of classification of the surfaces (see for instance \cite{bpv}).
\end{proof}

Since the fundamental group $C\times D$ is not abelian, the fundamental group of $S$ need not to be abelian. But we will prove the following by using Theorem~\ref{kodsh}.

\begin{theorem} \label{tor}
We have $H_1(S,\bZ)=0,$ that is  $H^2(S,\bZ)$ is torsion free and $q(S)=0.$
\end{theorem}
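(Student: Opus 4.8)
The plan is as follows. By Lemma~\ref{pgs} we already know $q(S)=0$, so $H_1(S,\bZ)$ is a finite group and it suffices to show it has no $p$-torsion for every prime $p$; by the universal coefficient theorem together with Poincar\'e duality this is the same as the torsion-freeness of $H^2(S,\bZ)$. Suppose to the contrary that $H_1(S,\bZ)$ has nontrivial $p$-torsion. Then there is a surjection $\pi_1(S)\to \bZ/p$ and hence a connected \'etale cyclic cover $\sigma:S'\to S$ of degree $p$. Since \'etale covers preserve the Kodaira dimension we still have $\mathrm{kod}(S')=\mathrm{kod}(S)\le 1$, while from $\chi(\cO_{S'})=p\,\chi(\cO_S)=p$ and $p_g(S)=q(S)=0$ we compute $p_g(S')=p-1+q(S')\ge p-1\ge 1$. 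Thus $S'$ is a surface of Kodaira dimension at most $1$ with positive geometric genus, and the whole point is to derive a contradiction from the existence of a dominant map from a (possibly covered) product onto such an $S'$.

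Next I would pull the cover back along $f$. Resolving the indeterminacy of $f$ gives a smooth surface $W$ with a birational morphism $W\to C\times D$ and a dominant generically finite morphism $g:W\to S$. Pulling $\sigma$ back produces an \'etale degree $p$ cover $W'=W\times_S S'\to W$, classified by the composite $\pi_1(W)=\pi_1(C\times D)=\pi_1(C)\times\pi_1(D)\to\bZ/p$, which factors through the abelianization $H_1(C,\bZ)\oplus H_1(D,\bZ)\to\bZ/p$, i.e.\ through a pair of $p$-torsion line bundles $(L_1,L_2)\in \mathrm{Jac}(C)[p]\times\mathrm{Jac}(D)[p]$ (here I use that $\mathrm{NS}(C\times D)$ is torsion free, so all torsion of $\mathrm{Pic}(C\times D)$ lives in $\mathrm{Jac}(C)\times\mathrm{Jac}(D)$). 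Two cases occur. If this composite is trivial, the cover $W'\to W$ splits, so $g$ lifts to a morphism $W\to S'$ and $f$ factors as a dominant rational map $C\times D\dashrightarrow S'$. Then the argument of Lemma~\ref{pgs} applies verbatim, using the irreducibility of the transcendental Hodge structure of the very general product $C\times D$: the injectivity of $H^{2,0}(S')\hookrightarrow H^{2,0}(C\times D)$ together with that irreducibility would force the canonical image of $S'$ to be a surface, contradicting $\mathrm{kod}(S')\le 1$. Hence $p_g(S')=0$, contradicting $p_g(S')\ge 1$.

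In the remaining case the composite is surjective, so $(L_1,L_2)$ is nontrivial and $W'$ is connected; it is then birational to the product $X=C'\times D'$ of the cyclic covers $f_1:C'\to C$, $f_2:D'\to D$ attached to $L_1,L_2$, which is exactly the situation of Theorem~\ref{kodsh}, and $g$ lifts to a dominant rational map $\tilde f:X=C'\times D'\dashrightarrow S'$. Now I would set $\Lambda=\tilde f^{*}H^2(S')_{\mathrm{tr}}\subset H^2(X)$, the image of the transcendental part of $H^2(S')$ (a birational invariant, hence well defined and a Hodge substructure), which satisfies $\Lambda^{2,0}=\tilde f^{*}H^{2,0}(S')\ne 0$ because $p_g(S')\ge 1$ and $\tilde f^{*}$ is injective on holomorphic $2$-forms. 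Theorem~\ref{kodsh} then tells us that the image of $X\dashrightarrow|\Lambda^{2,0}|$ is a surface; but this map is the composition of the dominant $\tilde f$ with the canonical map of $S'$, so the canonical image of $S'$ is a surface and $\mathrm{kod}(S')=2$, again contradicting $\mathrm{kod}(S')\le 1$. Both cases being impossible, $H_1(S,\bZ)$ has no torsion, and since $q(S)=0$ it vanishes. The main obstacle is the bookkeeping that identifies a torsion class on $S$ with one of the abelian \'etale product coverings to which Theorem~\ref{kodsh} applies: in particular the connectivity dichotomy for the pulled-back cover, and the separate (easier) treatment of the split case through the irreducibility used in Lemma~\ref{pgs}.
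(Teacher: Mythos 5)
Your argument is, in outline, the same as the paper's: use Lemma~\ref{pgs} to get $q(S)=0$, take a $\bZ/p$ quotient of $H_1(S,\bZ)$ and the associated \'etale cover $S'\to S$, note ${\rm kod}(S')\le 1$ and $\chi(\cO_{S'})=p\chi(\cO_S)=p$, hence $p_g(S')\ge p-1\ge 1$, and then run a dichotomy on the induced homomorphism $\pi_1(C)\times\pi_1(D)\to\bZ/p$: if it is zero, lift $f$ to $S'$ and contradict $p_g(S')\ge 1$ via the irreducibility argument of Lemma~\ref{pgs} (the paper words this contradiction as $\chi(\cO_{S'})=1\neq p$, which is the same thing); if it is surjective, lift to a product of cyclic covers and contradict ${\rm kod}(S')\le 1$ via Theorem~\ref{kodsh}. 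All of this matches the paper's proof, including the way $\Lambda$ is fed into Theorem~\ref{kodsh} and the factorization through the canonical map of $S'$.

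However, one step as written is false. In the surjective case you claim that $W'=W\times_S S'$ \emph{is birational to the product} $X=C'\times D'$, and you derive the lift $\tilde f:C'\times D'\dashrightarrow S'$ from that identification. When both $L_1$ and $L_2$ are nontrivial this cannot hold: $W'\to C\times D$ has degree $p$, while $C'\times D'\to C\times D$ has degree $p^2$. Concretely, $W'$ is birational to the quotient of $C'\times D'$ by the antidiagonal copy of $\bZ/p$ inside $\mathrm{Gal}(C'/C)\times\mathrm{Gal}(D'/D)\cong(\bZ/p)^2$, and this quotient is not birational to $C'\times D'$ (their holomorphic Euler characteristics differ by a factor of $p$), nor is it a product of curves in general. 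The statement you actually need --- that $f$ lifts to a dominant rational map $C'\times D'\dashrightarrow S'$ --- is nevertheless true, for a simpler reason: by construction $\pi_1(C')$ and $\pi_1(D')$ both map to $0$ in $\bZ/p$, so the pullback of the cover $S'\to S$ to $C'\times D'$ splits and the lift exists; equivalently, compose the degree $p$ quotient map from $C'\times D'$ to the antidiagonal quotient with the induced rational map to $S'$. (The lift then has degree $p\cdot\deg f$ rather than $\deg f$, which is harmless since only dominance is used.) This one-line repair is what the paper does implicitly, and with it your argument is complete.
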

\begin{proof} Let $S$ be a minimal surface of Kodaira dimension $0$ or $1$, and $K_S$ be the canonical divisor of $S$. Then we have $K_S^2=0$. We assume by contradiction that there is a dominant rational map
\[f: C\times D \dashrightarrow S\]
where $C$ and $D$ are very general curves and $H_1(S,\bZ)\neq 0.$ Since $H_1(S,\bZ)$ is a finite generated abelian group we can find a prime $p$, and a surjection
$H_1(S,\bZ)\to \bZ/p\bZ$. Therefore we have a surjection $\psi: \pi_1(S)\to  \bZ/p\bZ.$
Let $q:S'\to S$ be the \'etale covering associated to  $\ker(\psi).$
Let  $K_{S'}$ be the canonical class of  $S'$ we have
$K^2_{S'}= f^\ast(K_S)^2=0,$ then it follows that $S'$ has Kodaira dimension $0$ or $1.$
We also have the following proportionality  result for the topological and the holomorphic Euler characteristic:\ $c_2(S')=p\cdot c_2(S)$ and $\chi(\cO_{S'})=p\cdot \chi(\cO_S).$
\medskip
By composition with
$f$ (after a suitable resolution) we get \[\psi': \pi_1(C\times D)=\pi_1(C)\times \pi_1(D)\to \pi_1(S)\to \bZ/p\bZ.\]

{\bf Claim}: The map $\psi'$ is surjective.

First note that either $\psi'$ is a surjection or it is the zero map. Assume by contradiction $\psi'=0.$  In this case we have a lifting map $f':C\times D\dashrightarrow S'.$ Then by Lemma $\ref{pgs}$ either $f'$ is  birational or $p_g(S')=q(S')=0.$ The first case is not possible since $S'$ has Kodaira dimension $\leq 1$.
\smallskip

Now assume  $p_g(S')=q(S')=0.$ We get $\chi(\cO_{S'})=1-q(S')-p_g(S')=1=p\cdot \chi(\cO_S)=p.$  This also  gives a contradiction.

\medskip
We have proven $\psi': \pi_1(C\times D)\to \bZ/p\bZ$ is surjective.
This gives two maps $\pi_1(C)\to \bZ/p\bZ$ and $\pi_1(D)\to \bZ/p\bZ.$
They produce two \'etale coverings $C'\to C$ and $D'\to D$ (one of the two coverings can be trivial).

We have remarked that ${\rm kod}(S')\leq 1$  since $K_{S'}^2=0,$ then we have $\chi(\cO_{S})=p_g(S)-q(S)+1=1$ by Lemma~\ref{pgs} and therefore  $\chi(\cO_{S'})=p\chi(\cO_{S})=1.$ It follows then
$\dim H^{2,0}(S')=p_g(S')=\chi(\cO_{S'})+q(S')-1\geq p-1\geq 1.$

Now we take a resolution $f''$ of $f'$, that is a smooth projective surface $Y,$ a birational map $\epsilon: Y\to C'\times D'$ and a morphism
$f'': Y\to S'$ such that $ f''\circ \epsilon=f'$. We define then the substructure
$\Lambda\subset H^2(C'\times D') $ as follows :
 \[\Lambda=\epsilon_\ast f''^\ast(H^2(S'))\]
 is a Hodge substructure of $H^2(C'\times D')$
with non zero $(2,0)$ part,
 $\Lambda^{2,0}\neq 0.$ In fact $\epsilon_\ast: H^{2,0}(Y)\to H^{2,0}(C'\times D')$ is injective.
  By Theorem \ref{kodsh} we get that the image
$\kappa: C'\times D'\dashrightarrow |\Lambda^{2,0}|$ has dimension $2.$ On the other hand
$\Lambda^{2,0}= \epsilon_\ast f''^\ast(H^{2,0}(S'))= f'^\ast(H^{2,0}(S')),$ then $ \kappa$ must factorize:
 $\kappa= f'\circ \gamma$ where $\gamma$ is the canonical map of $S'.$ Since
Kodaira dimension of $S'$ is $\leq 1$ we get a contradiction.
\end{proof}

\section{Dominant rational map}

Combining Theorem~\ref{tor} with the result in \cite{BaPi} we will proof our main theorem. We also recall the fact that there is no map from a very general curve $C$ of genus $g(C)\geq 7$
to $S$ which is birational on its image if $S$ is a elliptic surface with $p_g(S)=q(S)=0$ of Kodaira dimension 1 and if ${\rm Pic}(S)$ is torsion free (Remark 4.4. in \cite{LeeP}).
\begin{theorem}
Let $C$ and $D$ be very general curves of genus $g_C\geq 7$ and $g_D\geq 3$ respectively. Then
there is no dominant rational map of degree $>1$ from $C\times D$ to $S$ where $S$ is a smooth projective surface of ${\rm kod}(S) \geq 0.$
\end{theorem}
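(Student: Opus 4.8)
The plan is to split according to the three possible values ${\rm kod}(S)\in\{0,1,2\}$. In each case I may replace $S$ by its minimal model (composing $f$ with the birational morphism does not change its degree), and I will use freely that $p_g$, $q$ and $H_1(-,\bZ)$ are birational invariants. The whole input for ${\rm kod}(S)\in\{0,1\}$ is supplied by Lemma~\ref{pgs} and Theorem~\ref{tor}: they give $p_g(S)=q(S)=0$, an elliptic fibration $\pi\colon S\to\bP^1$, and $H_1(S,\bZ)=0$. In particular ${\rm Pic}(S)$ is torsion free, since $q(S)=0$ kills ${\rm Pic}^0(S)$ and the torsion of $NS(S)$ is isomorphic to $H_1(S,\bZ)_{{\rm tors}}$. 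I denote by $p_C$ and $p_D$ the two projections of $C\times D$ onto its factors.

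For ${\rm kod}(S)=2$ I would argue as follows. If $S$ is not birational to $C\times D$, then \cite{BaPi} already excludes any dominant rational map $C\times D\dashrightarrow S$ (here $g_C\ge 7$ and $g_D\ge 3\ge 2$). If instead $S$ is birational to $C\times D$, composing $f$ with a birational map $S\dashrightarrow C\times D$ yields a dominant rational self-map of $C\times D$ of degree $>1$; but by the rigidity of maps between very general curves (the Riemann--Hurwitz statement recalled in the introduction, together with the absence of correspondences between the very general factors) the two projections of such a self-map factor through $p_C$ or $p_D$ followed by an isomorphism, so the self-map is an automorphism and has degree $1$, a contradiction. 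Note that a \emph{birational} $f$ with $S$ birational to $C\times D$ is of course allowed; this is precisely why the hypothesis $\deg f>1$ is needed only in the general type case.

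For ${\rm kod}(S)=0$ the minimal model of $S$ is a minimal surface of Kodaira dimension $0$ with $p_g=q=0$, hence an Enriques surface by the Enriques--Kodaira classification; but an Enriques surface has $H_1(S,\bZ)=\bZ/2\bZ\neq 0$, contradicting Theorem~\ref{tor}. The case ${\rm kod}(S)=1$ is the heart of the matter. Since a rational map out of a smooth curve is a morphism, for a general $d\in D$ the restriction $f_d:=f|_{C\times\{d\}}\colon C\to S$ is a morphism; set $\Gamma_d:=f_d(C)$. First, if $f_d$ is constant for general $d$, then $f$ is constant on the general fibre of $p_D$, so it factors rationally through $p_D$ and its image has dimension $\le 1$, contradicting dominance. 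Otherwise $f_d$ is non-constant for general $d$, and by Riemann--Hurwitz (applied to the very general $C$ of genus $\ge 7>1$) either the normalization of $\Gamma_d$ is rational, or $f_d$ is birational onto $\Gamma_d$; since the geometric genus of $\Gamma_d$ is constant for general $d$, exactly one alternative holds generically. In the rational case the curves $\{\Gamma_d\}_{d\in D}$ form a family of rational curves whose union $f(C\times D)$ is dense in $S$, so $S$ is uniruled and ${\rm kod}(S)=-\infty$, a contradiction. In the birational case $C\to\Gamma_d\subset S$ is a map from a very general curve of genus $\ge 7$, birational onto its image, into a properly elliptic surface with $p_g=q=0$ and torsion-free Picard group; this is exactly what is forbidden by Remark 4.4 of \cite{LeeP}.

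The main obstacle is the last case, ${\rm kod}(S)=1$, and within it the birational alternative. Everything there hinges on the deformation-theoretic result of \cite{LeeP}, and that result genuinely requires ${\rm Pic}(S)$ to be torsion free: the multiple fibres of the elliptic fibration $\pi$ are exactly what could otherwise obstruct the deformation argument used to rule out a genus $\ge 7$ curve sitting birationally inside $S$. This is why the vanishing $H_1(S,\bZ)=0$ of Theorem~\ref{tor} is the crucial, and most delicate, ingredient feeding into this final combination.
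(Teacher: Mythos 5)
Your proof is correct and follows essentially the same route as the paper: \cite{BaPi} disposes of the general type case, Theorem~\ref{tor} plus the classification of surfaces handles Kodaira dimension $0$ (the Enriques surface being the only candidate with $p_g=q=0$, killed by $H_1=\bZ/2\bZ$), and for Kodaira dimension $1$ the combination of Remark 4.4 of \cite{LeeP} with the Riemann--Hurwitz dichotomy shows the images of the fibers $C\times\{d\}$ are rational curves sweeping out $S$, forcing $S$ to be ruled. The only difference is that you spell out details the paper leaves implicit --- notably the subcase where $S$ is birational to $C\times D$ itself (excluded via the degree $>1$ hypothesis and rigidity of self-maps of the product), the deduction of torsion-freeness of ${\rm Pic}(S)$ from $H_1(S,\bZ)=0$, and the degenerate case where $f$ is constant on the fibers $C\times\{d\}$ --- which is welcome bookkeeping rather than a change of approach.
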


\begin{proof}
Let  $S$ be a smooth projective surface of ${\rm kod}(S) \geq 0$ and let $F:C\times D\dashrightarrow  S$ be a dominant map. We have to show that ${\rm kod}(S)= -\infty .$
In  \cite {BaPi}  it is shown that there is no dominant rational map from $C\times D\dashrightarrow  S$ where $C$ and $D$ are generic and $S$ is of general type. By Theorem~\ref{tor} we may assume that $p_g(S)=0$ and $H_1(S,\bZ)=0,$ this takes care of the case of surfaces with Kodaira dimension 0, because no such surface exists. So we have only to consider the case where ${\rm kod}(S)= 1$, $p_g=q=0$, and  ${\rm Pic}(S)$ is torsion free in force of Theorem~\ref{tor}. Remark 4.4 in \cite{LeeP} shows that there is no map from a very general curve $C$ of genus $g(C)\geq 7$
to $S$ which is birational on its image. This implies the map $F$ restricted to the fiber $C\times{t}$ for a general $t\in D$ cannot be birational. Therefore $F(C\times{t})$ is a rational curve since
$C$ is very general. Therefore $S$ is a ruled surface because a nonempty Zariski open set of $S$ is covered by rational curves, this gives a contradiction.
 \end{proof}
\section{Curves on an elliptic surface and symmetric products}

\subsection{Curves on an elliptic surface}  In this subsection we will slightly improve the result of \cite{LeeP} on deformation on curves on elliptic surface. We will give an application of  this result in the next subsection.

Let $\pi:S\to \bP^1$ be an elliptic surface (relatively minimal) with $p_g=q=0$ and of Kodaira dimension $1.$
Let $C$ be a smooth projective curve of genus $g>1$. Let
 \[\kappa: C\to S \] be a birational immersion, that is the map $C\to \kappa(C)$ is birational.
 Let $U$ be an irreducible component of the Kuranishi family of $\kappa.$
 \begin{proposition}\label{dimension}
Assume that  $C$ is neither a hyperelliptic nor a trigonal curve of genus $g>2.$ Then $\dim U\leq g-2.$
\end{proposition}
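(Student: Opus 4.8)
The plan is to bound $\dim U$ by a cohomological deformation count and then to estimate the resulting $h^0$ using the elliptic fibration together with the gonality of $C$. Since $\kappa$ is a birational immersion, the normal sheaf $N_\kappa={\rm coker}(T_C\to \kappa^\ast T_S)$ is a line bundle, and first order deformations of the morphism $\kappa$ with $S$ fixed are parametrized by $H^0(C,N_\kappa)$; hence $\dim U\le h^0(C',N_{\kappa'})$ for a general point $[\kappa']\in U$. Taking determinants in $0\to T_C\to \kappa^\ast T_S\to N_\kappa\to 0$ gives $N_\kappa\cong \omega_C\otimes \kappa^\ast\cO_S(-K_S)$, so by Serre duality $h^0(N_\kappa)=h^1(C,\kappa^\ast K_S)$. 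Writing $\Gamma=\kappa(C)$ and $d=K_S\cdot\Gamma$, Riemann--Roch shows that the desired inequality $h^0(N_\kappa)\le g-2$ is equivalent to $h^0(C,\kappa^\ast K_S)\le d-1$.

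I would then record the structural inputs. As $S$ is a relatively minimal properly elliptic surface, $K_S$ is nef and numerically a positive multiple of the fibre $F$; since $\Gamma$ is horizontal (it cannot lie in a fibre, the fibres having genus $\le 1$ while $g>1$), we get that $d=K_S\cdot\Gamma$ is a positive integer, so $d\ge 1$. Composing with $\pi$ gives $\phi=\pi\circ\kappa\colon C\to\bP^1$ of degree $n=\Gamma\cdot F$, and $\phi^\ast\cO_{\bP^1}(1)$ is a pencil of degree $n$ on $C$; since $C$ is neither hyperelliptic nor trigonal, its gonality is $\ge 4$ (in particular $g\ge 5$), hence $n\ge 4$. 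Finally the canonical bundle formula $K_S=\pi^\ast\cO_{\bP^1}(-1)+\sum_i(m_i-1)F_i$ yields $\kappa^\ast K_S=\phi^\ast\cO_{\bP^1}(-1)+E$, where $E=\sum_i(m_i-1)\kappa^\ast F_i$ is effective and supported on the ramification of $\phi$ over the multiple fibres.

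The estimate then follows from a short case analysis at the general member, where $\deg N_\kappa=2g-2-d$. If $h^0(N_\kappa)\le 1$ we are done because $g-2\ge 3$. If $N_\kappa$ is non-special then $h^0(N_\kappa)=g-1-d\le g-2$ as $d\ge 1$. If $N_\kappa$ is special with $h^1(N_\kappa)=1$ then $h^0(N_\kappa)=g-d$, which is $\le g-2$ once $d\ge 2$. If $h^1(N_\kappa)\ge 2$ and $h^0(N_\kappa)\ge 2$, Clifford's theorem in the form $\deg N_\kappa-2(h^0(N_\kappa)-1)\ge {\rm Cliff}(C)\ge 1$ (the last inequality holds since $C$ is not hyperelliptic) gives $h^0(N_\kappa)\le g-\tfrac{d+1}{2}$, again $\le g-2$ whenever $d\ge 2$. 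Thus every case with $d\ge 2$ is settled.

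The main obstacle is the residual case $d=K_S\cdot\Gamma=1$, which genuinely occurs: for a Dolgachev surface with two multiple fibres of coprime multiplicities (for instance $(2,3)$, with $n=6$) one has $K_S\cdot\Gamma=1$. Here $\kappa^\ast K_S$ has degree one, so $h^1(N_\kappa)=h^0(\kappa^\ast K_S)\le 1$; the only dangerous subcase is $h^0(\kappa^\ast K_S)=1$, i.e. $\kappa^\ast K_S$ effective, which would force $h^0(N_\kappa)=g-1$. I expect the heart of the argument to be the proof that this does not happen for the general member of $U$. Since $d$ and $n$ are locally constant on $U$, it suffices to show that for a general $[\kappa']\in U$ the degree-one class $\kappa'^\ast K_S\sim E-\phi^\ast\cO_{\bP^1}(1)$ is non-effective; then $h^0(\kappa'^\ast K_S)=0$, whence $h^0(N_{\kappa'})=g-2$ and $\dim U\le g-2$. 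This non-effectivity is exactly where the hypothesis $n\ge 4$ (i.e. $C$ not trigonal) and the analysis of curves through the multiple fibres from \cite{LeeP} should enter: using the explicit description of $E$ on the ramification of $\phi$ over the multiple fibres, one argues that imposing $\kappa'^\ast K_S\ge 0$ defines a proper subfamily of $U$, so it fails generically.
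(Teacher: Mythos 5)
Your reduction is correct and essentially parallels the first half of the paper's own proof: the bound $\dim U\leq h^0$ of the (torsion-free quotient of the) normal sheaf, with $N'\subseteq \omega_C\otimes\kappa^\ast K_S^{-1}$, followed by Serre duality and Riemann--Roch to translate the desired inequality into $h^0(\kappa^\ast K_S)\leq d-1$, where $d=\deg\kappa^\ast K_S\geq 1$. (One small imprecision: in this paper a birational immersion need not have injective differential, so $N$ may have torsion and one must work with $N'=N/N_{tors}$; this is harmless since it only lowers the relevant $h^0$.) Your Clifford-index analysis settles every case with $d\geq 2$; the paper does the same reduction by citing Proposition 2.2 of \cite{LeeP}, so your argument here is a reasonable self-contained substitute. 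Both routes concentrate the entire difficulty in the case $d=1$ with $\kappa^\ast K_S$ effective.

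That residual case, however, is exactly where your proposal stops being a proof. You write that you ``expect'' one can show non-effectivity of $\kappa'^\ast K_S$ for a general $[\kappa']\in U$ because effectivity should cut out a proper subfamily of $U$; nothing substantiates this, and a priori the effectivity locus could be all of $U$ --- ruling that out is precisely the content of the proposition, so the genericity strategy as stated is close to circular. The paper's treatment is different and unconditional. First, writing $K_S=\rho\lambda$ with $\lambda$ primitive and $\rho\geq 1$, the equality $d=1$ forces $\rho=1$, and the examination of the multiple fibers (Remark 4.2 of \cite{LeeP}) shows this happens only for the two configurations of multiple fibers $(2,3)$ and $(2,4)$; in both cases one has the explicit linear equivalence $K_S\equiv F_1-F_2$ between the two multiple fibers. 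Pulling back along $\kappa$: in the $(2,3)$ case $\kappa^\ast K_S=\cO_C(P)$, $\kappa^\ast F_1=\cO_C(G+E+F)$, $\kappa^\ast F_2=\cO_C(R+S)$, so $P+R+S\equiv G+E+F$; since $C$ is not trigonal the two effective degree-$3$ divisors must be equal, which is impossible because the fibers $F_1$ and $F_2$ are disjoint. In the $(2,4)$ case one gets $P+Q\equiv A+B$ with $Q\notin\{A,B\}$, contradicting that $C$ is not hyperelliptic. Thus effectivity of $\kappa^\ast K_S$ is impossible for \emph{any} non-hyperelliptic, non-trigonal curve --- no genericity in $U$ is invoked --- and this classification-plus-linear-equivalence argument is the missing idea in your proposal.
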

\begin{proof}
We assume that $C$ is a general curve in the family. Since $K_S$ is nef, non-trivial and semi-ample, we have that $\deg \kappa^\ast(K_S)\geq 1.$
The differential of $\kappa$  induces an exact sequence
\[0\to T_C\to \kappa^\ast T_S\to N\to 0.\]
Let  $N_{tors}$ be the torsion of the normal bundle $N$ and  $N'=N/N_{tors}$ be the quotient.
We have (\cite{ac}, Chapter XXI in \cite{ACG})
$\dim(U)\leq h^0(N'),$ $N'\subset K_C\otimes \kappa^\ast K_S^{-1}.$
Since the curve $C$ is not hyperelliptic, we have to consider only the case where  $\deg \kappa^\ast(K_S)=1,$ by Proposition 2.2 in \cite{LeeP}.
By examination of the multiple fibers we see that  $K_S$ is numerically a multiple of line bundle $\lambda$,   $K_S=\rho\lambda$ where $\rho \geq 1$. Then $\deg \kappa^\ast(K_S)=1$ implies $\rho=1,$  this is possible only
in the following two cases, both with only two multiple fibers (cf. Remark 4.2 in \cite{LeeP}):
\begin{enumerate}
\item the case $(2,3)$,
\item the case $(2,4)$.
\end{enumerate}

In the $(2,3)$ case $S=S_{2,3}$ has multiple fibers $F_1$ and $F_2$ in the elliptic fibration where $2F_1=F=3F_2$ for a general fiber $F$.  Then
$F_1=3K_S$ and $F_2=2K_S$, and it implies
 $K_S=F_1-F_2.$ If  $h^0(\kappa^\ast(K_S))=1$ then $\kappa^\ast(K_S)=\cO_C(P)$ for $P\in C.$ On the other hand since $\kappa(C)$ is not contained in the fiber $\kappa^\ast(F_1)=\cO_C(G+E+F)$
and $\kappa^\ast(F_2)=\cO_C(R+S)$ where $G,E,F,R, S$ are point of $C.$
Therefore $R+S+P\equiv G+E+F,$ but since $C$ is not trigonal we would have then $\{ R,S,P\}=\{ G,E,F\},$ which is impossible since the two fibers are distinct.\\
 In the second case
we have $K_S=F-F_1-F_2=F_1-F_2=3F_2-F_1,$ $2F_1=F=4F_2.$
If $\kappa^\ast(K_S)=\cO_C(P)$ we have $\kappa^\ast(F_1)=\cO_C(A+B)$
and $\kappa^\ast(F_2)=\cO_C(Q)$. Then $A+B\equiv P+Q$, which is also impossible.
\end{proof}

\begin{example} \label{ex}
We note that the above result is sharp.
Let us consider the case $(2.3)$, that is $S=S_{2,3}$. The elliptic surface  $S=S_{2,3}$ is simply connected with $p_g=q=0$ \cite{Dol}. We have $6K_S=F$ and
$3K_S=F_1$ and $2K_S=F_2$. By Poincar\`e duality the coset  $H=\{\alpha \in H^2(S,\bZ):
K_S\cdot \alpha=1\}$ is not empty. We notice that
\begin{enumerate}
\item if  $\alpha \in H$ then $ \alpha^2=2s+1$ is odd,
\item if $\alpha \in H$ then $\alpha+K_S\in H$,
\item if $\alpha \in H$ then $(\alpha+ K_S)^2=\alpha^2+2$.
\end{enumerate}
We choose $\alpha \in H$ such that $\alpha^2=-3$ and consider the line bundle
$L_r=\alpha\otimes rK_S$. Then $L_r^2=2r-3$, and
we have from the Riemann Roch theorem
$\chi(L_r)= r-1$ and $ L_r(L_r+K_S)=2(r-1)$.

By the duality
$h^2(L_r)=h^0(K_S-L_r)=0$ since $K_S-L_r= (1-r)K_S-\alpha$, $-\alpha\cdot F=-6<0$, and $F$ is semi-ample.
It follows that for $g> 1$ $L_g$ has a global section of arithmetic genus $g$ and the family depends upon
 $h^0(L_g)-1\geq g-2$ parameters.
\end{example}
\subsection{Symmetric product}
This subsection is devoted to prove the following.
\begin{proposition}\label{sim2}
Let $C$ be a very general curve of genus $g\geq 6.$ Let $C_2$ be the $2$-symmetric product of $C.$
Let $S$ be a surface of Kodaira dimension $1.$ Then there is no dominant rational map  $f: C_2 \dashrightarrow S.$
\end{proposition}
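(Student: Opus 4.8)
The plan is to reduce $S$ to a relatively minimal elliptic surface over $\bP^1$ with $p_g(S)=q(S)=0$, and then to exploit the covering of $C_2$ by copies of $C$ together with the deformation bound of Proposition~\ref{dimension}. For the reduction I would first argue as in Lemma~\ref{pgs}. The pullback $f^*\colon H^{2,0}(S)\to H^{2,0}(C_2)$ is injective, and for very general $C$ the transcendental part of $H^2(C_2,\bQ)$ is an irreducible Hodge structure (it is the primitive $\wedge^2H^1(C)$ sitting inside $H^2(C\times C)$ via the quotient $C\times C\to C_2$, whose irreducibility is exactly the input used in Lemma~\ref{pgs}). Hence $p_g(S)\ne 0$ would force $f^*$ to be onto $H^{2,0}(C_2)$, so that the canonical map of $C_2$ factors through $S$; this is impossible, since $C_2$ is of general type with generically finite canonical map for $g\ge 6$ while ${\rm kod}(S)=1$. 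Thus $p_g(S)=0$. For the irregularity, any nonconstant map from $C_2$ to a curve $E$ of genus $\ge 1$ factors through $alb(C_2)=J(C)$, which is simple for very general $C$; combined with $p_g(S)=0$ this would give $q(S)\ge g$ and $\chi(\cO_S)=1-q(S)<0$, contradicting ${\rm kod}(S)\ge 0$. So $q(S)=0$ and $S$ carries an elliptic fibration $\pi\colon S\to\bP^1$. Finally I would reduce to the case where ${\rm Pic}(S)$ is torsion free, running the argument of Theorem~\ref{tor} (passing to the cyclic cover attached to a torsion class and applying the Hodge-theoretic machinery of Section~2 to a suitable abelian cover of $C_2$); this places us in the multiple-fibre situation treated in Proposition~\ref{dimension}.

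Next I would use the tautological family of curves on $C_2$. For $x\in C$ let $C_x\subset C_2$ be the image of $\{x\}\times C$ under $C\times C\to C_2$; these are smooth curves isomorphic to $C$, they sweep out $C_2$, and since $C$ is very general of genus $g\ge 6$ each $C_x$ is non-hyperelliptic and non-trigonal (indeed of gonality $\ge 4$). By the Riemann--Hurwitz principle recalled in the introduction, for general $x$ the restriction $f|_{C_x}\colon C_x\to S$ is either a birational immersion or has rational image. If the image is rational for general $x$, then $S$ is swept out by rational curves, hence uniruled and ${\rm kod}(S)=-\infty$, a contradiction. So $f|_{C_x}$ is a birational immersion for general $x$, and we obtain a nontrivial family of birational immersions $C\to S$.

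The heart of the matter, and the step I expect to be the main obstacle, is to convert this into a contradiction through the sharp form of Proposition~\ref{dimension}. The class $\xi=[f(C_x)]$ and the number $\deg\kappa^*K_S=C_x\cdot f^*K_S$ are the key invariants: computed on a resolution of $f$, the pullback of $K_S$ is nef of self-intersection zero, and since $NS(C_2)_\bQ$ has rank $2$ for very general $C$ (the $\mathfrak{S}_2$-invariant part of the rank-$3$ lattice $NS(C\times C)$), this class is forced onto a boundary ray of the nef cone, which I would identify explicitly in terms of the generators $[C_p]$ and the diagonal class in order to pin down $\deg\kappa^*K_S$. The delicate point is that the numerics of a covering family of smooth genus-$g$ curves are exactly balanced against the bound $\dim U\le g-2$, so no crude dimension count suffices; the contradiction has to come from the extremal case $\deg\kappa^*K_S=1$, which is precisely the configuration shown impossible in the proof of Proposition~\ref{dimension} (the relations among the two multiple fibres would force a linear equivalence of the form $R+S+P\equiv G+E+F$ on $C$, contradicting non-trigonality, and the $(2,4)$ case similarly). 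Showing that the immersions $f|_{C_x}$ are driven into this extremal case --- equivalently that one cannot avoid $\deg\kappa^*K_S=1$ --- is where the hypothesis $g\ge 6$ (gonality $\ge 4$, so that both the $(2,3)$ and $(2,4)$ configurations are excluded) enters, and is the part requiring the most care; once it is in place, $f|_{C_x}$ cannot be a birational immersion, forcing the rational-image case and the uniruledness contradiction above.
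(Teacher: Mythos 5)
Your reduction to a relatively minimal elliptic fibration $\pi:S\to\bP^1$ with $p_g(S)=q(S)=0$ is sound, and so is the dichotomy for the coordinate curves (birational immersion versus rational image, the latter contradicting ${\rm kod}(S)=1$). The gap is in your final step, and it rests on a misreading of Proposition~\ref{dimension}. The case $\deg\kappa^\ast K_S=1$ is \emph{not} ``shown impossible'' in the proof of that proposition: the non-trigonality argument there (the relation $R+S+P\equiv G+E+F$, and its analogue in the $(2,4)$ case) only rules out $\kappa^\ast K_S$ being \emph{effective}, which is exactly what is needed to get $h^0(N')\le g-2$ rather than $g-1$; it does not exclude birational immersions with $\deg\kappa^\ast K_S=1$. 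On the contrary, Example~\ref{ex} exhibits families of dimension at least $g-2$ of such curves on the surface $S_{2,3}$, so the bound of Proposition~\ref{dimension} is sharp and no contradiction can be extracted from the mere existence of a single immersion $f|_{C_x}$, nor from the $1$-parameter family of coordinate curves on the fixed $C_2$ (which lies far below the bound $g-2$ in any case). Hence your concluding claim that ``$f|_{C_x}$ cannot be a birational immersion'' does not follow, and the proof collapses at its decisive point.

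What is missing is the actual mechanism of the paper's proof: a dimension count over \emph{moduli}. First one proves that $f'_\ast:\pi_1(Y)\to\pi_1(S)$ is surjective, where $f'':Y\to S$ resolves $f$; this is done by passing to the \'etale cover $S'\to S$ of degree $m$ attached to the image subgroup, using the proportionality $\chi(\cO_{S'})=m\,\chi(\cO_S)=m$ together with the dichotomy ``$p_g(S')=0$ or the lift is birational,'' the second branch being absurd because $Y$ is birational to the general-type surface $C_2$ while $K_{S'}$ is nef with $K_{S'}^2=0$. Since $\pi_1(C_2)\cong H_1(C,\bZ)$ is abelian, $\pi_1(S)$ is then abelian, so by Dolgachev's classification $\pi$ has exactly two multiple fibers; consequently $H^2(T_S)=0$ and such surfaces move in a $10$-dimensional family. (Your proposed reduction to ``${\rm Pic}(S)$ torsion free'' via the machinery of Section 2 is both unjustified---Theorem~\ref{kodsh} concerns products of two curves, not covers of $C_2$---and beside the point: what one needs from the fundamental group is the two-multiple-fiber structure, i.e.\ the bound on the moduli of $S$.) Then one lets $C$ vary: the immersions $f_P:C\to S$, as $C$ runs over its moduli ($3g-3$ parameters) and $P$ over $C$ (one more), form a $(3g-2)$-dimensional family of birational immersions into a $10$-dimensional family of target surfaces, so some fixed $S$ receives a family of dimension at least $3g-2-10$; Proposition~\ref{dimension} then forces $3g-12\le g-2$, i.e.\ $g\le 5$, contradicting $g\ge 6$. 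Your remark that ``no crude dimension count suffices'' has it exactly backwards: the dimension count, performed over moduli rather than on a fixed $C_2$, \emph{is} the proof, and it is precisely what your proposal omits.
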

\begin{proof}

Assume that  $f: C_2 \dashrightarrow S$ is a dominant rational map and let $f':Y\to S$ be a regular morphism,
 where $f'$ is  a resolution of $f.$ That is there is a birational morphism $\epsilon: Y\to C_2.$
By the irreducibility of the transcendental Hodge structure of $H^2(C_2)$ (see also Subsection~\ref{2.1}) and the fact that the canonical map is birational for $C_2$ we get $p_g(S)=q(S)=0.$ Since ${\rm kod}(S)= 1$ we
may assume that $\pi:S\to \bP^1$ is a relatively minimal elliptic fibration. We claim that the map on the fundamental groups $f'_\ast : \pi_1(Y)\to  \pi_1(S)$ is surjective.

We assume by contradiction $f'_\ast : \pi_1(Y)\to  \pi_1(S)$ is not surjective.  In fact we know that the image $\Gamma$ of $f'_\ast$ has a finite index.
Assume by contradiction that $\Gamma\neq  \pi_1(S).$ Let $\pi: S'\to S$ be the associated covering of degree $m=[\pi_1(S):\Gamma].$
By the construction we can find a lift $f'':Y\to S'$ such that $\pi\circ f''=f'.$ By the proportionality we get
$\chi(\cO_{S'})=m\chi(\cO_{S})=m.$ Therefore $p_g\geq m-1.$ But again we have than $p_g(S')=0$ or $S'\cong Y$ In the first case we obtain $m=1$ and therefore a contradiction, in the second one case we may assume that $C_2$ is birational to $S'$ so we may assume $S'=Y$ and $f''=f'$ the resolution of $f.$ Since the map $\pi$ is \'etale, $K_{S'}$ is nef and $K_{S'}^2=K_S^2=0$.
But since $Y$ is birational to $C_2$ that is a surface of general type, so we find a contradiction.

Since $\pi_1(Y)\cong \pi_1(C_2) \cong H_1(C,\bZ)$, $\pi_1(S)$ is abelian. Therefore $\pi: S\to\bP^1$ must have exactly two multiple fibers since it is non-rational \cite{Dol}. We have $H^2(T_S)=0,$ and the deformation of $S$ depend upon $10$ parameters (see Proposition 3.4 in \cite{LeeP}).
Let  $C$ be a very general curve of genus $g\ge 6$. For any point $P$ of $C$, let $c_P:C\to C_2$ be the embedding (it is called a coordinate curve) by
\[c_P(Q)=P+Q.\]  Set $C_P =c_P(C))$ (it is called a coordinate curve).
We remark that $C_P^2=1$ and that give an ample divisor of $C_P.$

Consider the composition map $f_P=f\circ c_P: C\to S.$
Then for general $P$, $f_P$ is  birational onto its image: otherwise, $C$ can be mapped non-trivially only on
$\bP^1$ since $C$ is a very general curve in the moduli space of curves of genus $g$.
 But $f$ is dominant and $S$ is not ruled.

It follows that we have a family of birational immersions of dimension $3g-3+1$
(the dimension of the moduli  plus the one due to  the coordinates curves ${C_P}_{\{P\in C\}}$ in
$C_2$). We remark that $C$ is not trigonal. Since the deformations of our surfaces depend on $10$ dimensional moduli, in some fixed surface $S$ we must find at least a $3g-2-10$ dimensional family of birational immersions. Therefore by Proposition~\ref{dimension} we have \[3g-2-10\leq g-2\]
$2g\leq 10$ we get $g\leq 5.$
\end{proof}

\begin{remark} The same argument for the surjection of the map on the fundamental groups in the proof of Proposition~\ref{sim2} works in the proof of Theorem~\ref{tor}.
\end{remark}

\begin{remark} When $g=5$ the above example \ref{ex} shows that there is a three dimensional family
in $S_{2.3}.$ Then there is a family of $3g-2 =13$ dimensional deformations into the family of elliptic surfaces $S_{2.3}.$ It could be however that on a general $S_{2.3}$ we have no isotrivial deformation, that is deformation of constant moduli.
\end{remark}

For the maps between $C_2$ and surfaces of general type, we have
the inequality $3g-2-19\leq g-2$ \cite{gp}: that is $g< 10$. The case $g=9,8$ can be also excluded.
When ${\rm kod}(S)=0$ with  $p_g=q=0,$ the case of Enriques surfaces can be also excluded by the dimensional count as above or by using the method in the proof of Theorem~\ref{tor}.

In conclusion we get the following.
\begin{theorem}
Let $C$ be a very general curve of genus $\geq 10$ and let $f: C_2 \dashrightarrow S$ be a dominant rational map of degree $\geq 2$. Then
$S$ is a rational surface.
\end{theorem}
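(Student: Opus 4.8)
The plan is to walk through the Enriques–Kodaira classification of $S$ and to exclude every value ${\rm kod}(S)\ge 0$, so that $S$ is ruled; and then, crucially, to prove that the irregularity $q(S)$ vanishes, which upgrades ``ruled'' to ``rational''. Two ingredients are already in hand: the transcendental part of $H^2(C_2)$ is irreducible and the canonical map of $C_2$ is birational (as in the proof of Proposition~\ref{sim2}), together with the deformation estimate of Proposition~\ref{dimension} and the dimension count performed in Proposition~\ref{sim2}.

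First I would establish $p_g(S)=q(S)=0$ for any dominant rational map $f:C_2\dashrightarrow S$ with ${\rm kod}(S)<2$. The vanishing $p_g(S)=0$ is the argument opening the proof of Proposition~\ref{sim2}: $f^\ast$ is injective on holomorphic two-forms, so if $p_g(S)\ne 0$ then, by irreducibility of the transcendental Hodge structure of $H^2(C_2)$, $f^\ast$ would be an isomorphism onto $H^{2,0}(C_2)$ and the birational canonical map of $C_2$ would factor through the lower-dimensional canonical image of $S$, a contradiction. For $q(S)=0$ I would use the coordinate curves. If $q(S)>0$ then, $p_g(S)$ being zero, the image of the Albanese map of $S$ is a curve $E$ of genus $q(S)\ge 1$, and composing gives a dominant rational map $g_E:C_2\dashrightarrow E$. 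Restrict $g_E$ to a general coordinate curve $C_P\cong C$. By the Riemann–Hurwitz theorem for the very general curve $C$, a nonconstant (hence surjective) morphism $C\to E$ with $g(E)\ge 1$ must be an isomorphism, so that $E\cong C$; writing then $g_E(P+Q)=\sigma(Q)$ along $C_P$ with $\sigma:C\to E$ an isomorphism and using the symmetry $P+Q=Q+P$ forces $\sigma(P)=\sigma(Q)$ for all $P,Q$, which is absurd. Hence $g_E|_{C_P}$ is constant for general $P$, and the same symmetry then forces $g_E$ itself to be constant, contradicting its dominance onto $E$. Therefore $q(S)=0$; note that this also covers the ruled case, where $p_g(S)=0$ holds automatically.

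Next I would rule out the nonnegative Kodaira dimensions. For ${\rm kod}(S)=2$ the dimension count of \cite{gp} yields $3g-2-19\le g-2$, i.e. $g<10$, excluded by hypothesis. The case ${\rm kod}(S)=1$ is exactly Proposition~\ref{sim2}. If ${\rm kod}(S)=0$, then $p_g(S)=q(S)=0$ leaves only Enriques surfaces; since $K_S$ is numerically trivial, the bound $\dim U\le h^0(N')\le g$ from the proof of Proposition~\ref{dimension} applies to the family of coordinate curves, and comparing with its dimension $3g-2$ over the $10$-dimensional moduli gives $3g-2-10\le g$, i.e. $g\le 6$, again excluded. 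Alternatively Enriques is excluded by the covering method of Theorem~\ref{tor}, passing to the $K3$ double cover $S'$ with $\chi(\cO_{S'})=2$.

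This leaves only ${\rm kod}(S)=-\infty$, so $S$ is ruled, and since $q(S)=0$ its base is $\bP^1$ and $S$ is rational. I expect the vanishing of $q(S)$ in the ruled case to be the delicate step. Indeed the product $C\times D$ does dominate non-rational ruled surfaces, for instance $C\times\bP^1$ via a covering $D\to\bP^1$, so for products one cannot do better than ``ruled''; the improvement for $C_2$ rests entirely on the symmetry between its two coordinates, which, fed into the Riemann–Hurwitz rigidity of $C$, kills any fibration of $S$ over a positive-genus base. The borderline case $g(E)=g$, where a priori $E\cong C$, is where the symmetry argument must be handled with care.
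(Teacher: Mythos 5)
Your proposal follows the same overall route as the paper for ${\rm kod}(S)\geq 0$: the inequality $3g-2-19\leq g-2$ from \cite{gp} for surfaces of general type, Proposition~\ref{sim2} for Kodaira dimension one, and the exclusion of Enriques surfaces in Kodaira dimension zero (your count $3g-2-10\leq \dim U\leq h^0(N')\leq g$, using that $K_S$ is numerically trivial so $N'$ sits inside a line bundle of degree $2g-2$, is a concrete and correct version of what the paper only gestures at with ``the dimensional count as above''). Where you genuinely add something is the ruled case: the theorem asserts \emph{rational}, not merely ruled, and this requires showing that $C_2$ cannot dominate a curve of positive genus (equivalently $q(S)=0$ when $S$ is ruled), a step the paper never writes down. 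Your coordinate-curve/symmetry argument is the right mechanism, and it isolates exactly the feature that distinguishes $C_2$ from the product $C\times D$, which does dominate the non-rational ruled surface $C\times \bP^1$.

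However, as written that symmetry argument has a gap. Restricting $g_E$ to $C_P$ gives an isomorphism $\sigma_P: C\to E$ which a priori depends on $P$; the symmetry $g_E(P+Q)=g_E(Q+P)$ only yields $\sigma_P(Q)=\sigma_Q(P)$, which by itself is not absurd: if $E$ were an elliptic curve, then $h(P,Q)=P+Q$ under the group law is a symmetric map all of whose restrictions are isomorphisms, so the bare symmetry cannot be the whole story. The contradiction requires $\sigma_P$ to be independent of $P$, and this is where very-generality must be invoked once more: since $E\cong C$ has genus $g\geq 2$ and $C$ is very general, ${\rm Aut}(C)$ is trivial, so ${\rm Isom}(C,E)$ consists of a single element; hence $\sigma_P=\sigma$ for every $P$ with non-constant restriction (alternatively, $P\mapsto \sigma_P$ is a morphism from an irreducible variety to the finite set ${\rm Isom}(C,E)$, hence constant). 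Only then does $\sigma(Q)=g_E(P+Q)=g_E(Q+P)=\sigma(P)$ for all general $P,Q$ contradict injectivity of $\sigma$. With this one line added, your proof is complete and, on the ruled case, more explicit than the paper itself.
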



\end{document}